\newtheorem{theorem}{Theorem}[section]
\newtheorem{corollary}[theorem]{Corollary}
\newtheorem{lemma}[theorem]{Lemma}
\newtheorem{proposition}[theorem]{Proposition}
\theoremstyle{definition}
\newtheorem{remark}[theorem]{Remark}
\numberwithin{equation}{subsection}
\newcommand{\Aut}{\operatorname{Aut}}
\newcommand{\Ker}{\operatorname{Ker}}
\newcommand{\Inn}{\operatorname{Inn}}
\newcommand{\R}{\operatorname{R}}
\newcommand{\M}{\operatorname{M}}
\newcommand{\Z}{\operatorname{Z}}
\newcommand{\I}{\operatorname{I}}
\begin{document}

\title[Some Remarks on Twin Groups]{Some Remarks on Twin Groups}

\author{Tushar Kanta Naik}
\address{Department of Mathematical Sciences, Indian Institute of Science Education and Research (IISER) Mohali, Sector 81,  S. A. S. Nagar, P. O. Manauli, Punjab 140306, India.}
\email{mathematics67@gmail.com / tushar@iisermohali.ac.in}
\author{Neha Nanda}
\email{nehananda@iisermohali.ac.in}
\author{Mahender Singh}
\email{mahender@iisermohali.ac.in}

\subjclass[2010]{Primary 20F55 ; Secondary 57M27, 20E45}
\keywords{Co-Hopfian, Doodle, $R_{\infty}$-property, right angled Coxeter group, split twin, twisted conjugacy class}

\begin{abstract}
The twin group $T_n$ is a right angled Coxeter group generated by $n- 1$ involutions and having only far commutativity relations. These groups can be thought of as planar analogues of Artin braid groups. In this note, we study some properties of twin groups whose analogues are well-known for Artin braid groups. We give an algorithm for two twins to be equivalent under individual Markov moves. Further, we show that twin groups $T_n$ have $R_\infty$-property and are not co-Hopfian for $n \ge 3$.
\end{abstract}
\maketitle

\section{Introduction}\label{introduction}
The twin group $T_n$, $n \ge 2$, is a right angled Coxeter group with $n-1$ generators and only far commutativity relations. These groups first appeared in the work of Shabat and Voevodsky \cite{sv} in the context of curves over number fields. Later, these groups were investigated by Khovanov \cite{Khovanov}, who referred them as twin groups and gave a geometric interpretation of these groups similar to the one for Artin braid groups $B_n$. Khovanov considered configurations of $n$ arcs in the infinite strip $\mathbb{R} \times  [0,1]$ connecting $n$ marked points on each of the parallel lines $\mathbb{R} \times \{1\}$ and $\mathbb{R} \times \{0\}$ such that each arc is monotonic and no three arcs intersect at a common point. Two such configurations are considered equivalent if one can be deformed into the other by a homotopy of such configurations in $\mathbb{R} \times [0,1]$ keeping the end points of the arcs fixed, and an equivalence class is called a \textit{twin}. Analogous to braid groups, the product of two twins is defined by placing one twin on top of the other. The collection of twins on $n$ strands under this operation forms a group which is isomorphic to $T_n$.
\par
Analogous to closure of a geometric braid in the $3$-space, one can define the closure of a twin on a $2$-sphere by taking the one point compactification of the plane. A \textit{doodle} on a closed oriented surface is a collection of piecewise linear closed curves without triple intersections. It follows that closure of a twin on a $2$-sphere is a doodle. Doodles on a 2-sphere were first introduced by Fenn and Taylor \cite{Fenn-Taylor}, and the idea was extended to immersed circles in a 2-sphere by Khovanov \cite{Khovanov}. He proved an analogue of the classical Alexander theorem for doodles. On the other hand, an analogue of the Markov theorem for doodles on a $2$-sphere has been established by Gotin \cite{Gotin}. Recently, Bartholomew-Fenn-Kamada-Kamada \cite{bfkk, fenn} extended the study of doodles to immersed circles in a closed oriented surface of any genus, which can be thought of as virtual links analogue for doodles. A recent preprint \cite{CisnerosFloresJuyumaya2020} constructs an Alexander type invariant for
oriented doodles from a deformation of the classical Tits representation of $T_n$ and prove that analogous to the Alexander polynomial for classical links, this invariant vanishes on unlinked doodles with more than one component.
\par
Recall that, in view of the classical Alexander and Markov theorems, the problem of classifying isotopy classes of links in the $3$-space is equivalent to the algebraic problem of classifying Markov classes of braids in the infinite braid group. The algebraic link problem is to determine whether two braids are equivalent under Markov moves. The conjugacy problem (the first Markov equivalence)  in braid groups was solved by Garside \cite{Garside}, whereas the case of second Markov equivalence was attempted by Humphries \cite{Humphries1}. Since we have Alexander and Markov theorems for doodles on a 2-sphere, it is natural to formulate an analogue of the algebraic link problem for doodles. We address this problem in Theorem \ref{MainTheoremAlgebraicDoodleProblem} of Section  \ref{AlgebraicDoodleProblem}.

\par
We also study $R_{\infty }$-property and (co)-Hopfianity of twin groups. A group $G$ is said to have $R_{\infty }$-property if it has infinitely many $\phi$-twisted conjugacy classes for each automorphism $\phi$ of $G$. Here, two elements $x, y \in G$ are said to lie in the same $\phi$-twisted conjugacy class if there exists $g \in G$ such that $x = gy\phi(g)^{-1}$. The idea of twisted conjugacy arose from the work of Reidemeister \cite{Reidemeister}. Due to deep connection of twisted conjugacy with Nielsen fixed-point theory, the study of $R_{\infty }$-property of groups has attracted a lot of attention. It is well-known that braid groups $B_n$ have $R_{\infty }$-property for all $n \geq 3$ \cite{Felshtyn}. We refer the reader to \cite{Cox, Timur3, Timur2, Goncalves2, Goncalves1,  Juhasz, Mubeena, Timur4, Timur1} for recent works on the topic. In Section \ref{Rinfinity}, we show that twin groups $T_n$ have $R_{\infty }$-property for $n \geq3$ (Theorem \ref{MainTheoremRinfinity}).
\par
Recall that a group is co-Hopfian (respectively Hopfian) if every injective (respectively surjective) endomorphism is an automorphism. These properties are closely related to $R_{\infty }$-property, see, for example \cite[Lemma 2.3]{Mubeena2}. Braid groups $B_n$ are known to be Hopfian being residually finite \cite[Chapter I, Corollary 1.22]{Kassel} and are not co-Hopfian for $n \geq 2$ \cite{bell}. In fact, the map which sends each standard generator of $B_n$ to itself times a fixed power of the central element extends to an injective endomorphism which is not surjective. It is well-known that Coxeter groups, in particular twin groups, are Hopfian \cite[Theorem C, p. 55]{brown}. In Section \ref{Hopfian}, we prove that twin groups $T_n$ are co-Hopfian only for $n = 2$ (Theorem \ref{MainTheoremCoHopfian}). We compare our results with braid
groups and mention some open problems for the reader.
\bigskip

\section{Preliminaries}\label{basic}
We begin the section by setting some notations. For elements $g, h$ of a group $G$, we denote $g^{-1}h^{-1}gh$ by $[ g, h ]$ and the conjugacy class of $g$ by $g^G$.
\par
For an integer $n \ge 2$, the \textit{twin group} $T_n$ is defined as the group with the presentation
$$\big\langle s_1, s_2, \dots, s_{n-1}~|~ s_i^{2} = 1~ \text{for}~1 \le i \le n-1~\textrm{and}~ s_is_j = s_js_i~ \text{for}~ |i-j| \geq 2 \big\rangle.$$
The generator $s_i$ can be geometrically presented by a configuration of $n$ arcs as shown in Figure \ref{figure1}.
\begin{figure}[hbtp]
\centering
\includegraphics[scale=0.5]{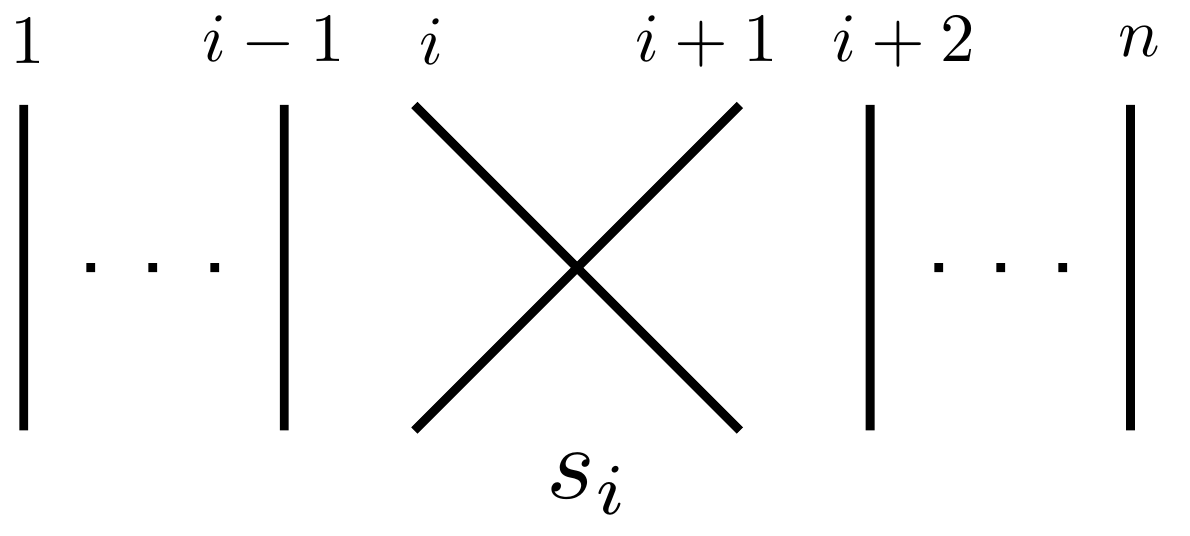}
\caption{The generator $s_i$}
\label{figure1}
\end{figure}

The kernel of the natural surjection from $T_n$ onto $S_n$, the symmetric group on $n$ symbols, is called the \textit{pure twin group} and is denoted by $PT_n$.
\par

\subsection{Elementary transformations and cyclically reduced words} We recall some basic ideas and results from \cite{NNS} on twin groups that will be used in subsequent sections. We begin by defining three elementary transformations of a word $w\in T_n$ as follows:
\begin{itemize}
\item[(i)] Deletion: Replace the word $w$ by deleting subword of the form $s_is_i$.
\item[(ii)] Insertion: Replace the word $w$ by inserting a word of the form $s_is_i$ in $w$.
\item[(iii)]Flip: Replace a subword of the form $s_is_j$ by $s_js_i$ in $w$ whenever $\mid i-j\mid \geq 2$.
\end{itemize}

We say that two words $w_1$ and $w_2$ are {\it equivalent}, written as $w_1 \sim w_2$, if there is a finite chain of elementary transformations turning $w_1$ into $w_2$. It is easy to check that $\sim$ is an equivalence relation. Obviously, $w\sim w'$ if and only if both $w$ and $w'$ represent the same element in $T_n$.\\
\par

For a given word $w=s_{i_1} s_{i_2}\dots s_{i_k}$ in $T_n$, let $\ell(w)=k$ be the \textit{length} of $w$. We say that the word $w$ is {\it reduced} if $\ell(w)\leq \ell(w')$ for all $w'\sim w$. By well-ordering principle,  the equivalence class of each word contains a reduced word. It is possible to have more than one reduced word representing the same element. In fact, all reduced words representing the same element differ by finitely many flip transformations, for example, $s_1s_4$ and $s_4s_1$. Obviously, any two reduced words in the same equivalence class have the same length. This allows us to define  the length of an element $ w \in T_n$ as the length of a reduced word representing $w$. Below is a characterisation of a reduced word.

\begin{lemma}\label{lem1}
A word $w\in T_n$ is reduced  if and only if $w$ satisfies the property that whenever two $s_{i}$'s appear in $w$, there always exist atleast one $s_{i-1}$ or $s_{i+1}$ in between them.
\end{lemma}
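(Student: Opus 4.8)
The plan is to prove both implications by analysing what happens to the length of a word under elementary transformations.

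For the forward direction, I would argue by contrapositive. Suppose $w = s_{i_1}\cdots s_{i_k}$ contains two occurrences of some generator $s_i$, say at positions $p < q$, with no $s_{i-1}$ or $s_{i+1}$ appearing among $s_{i_{p+1}},\dots,s_{i_{q-1}}$. Every generator strictly between these two positions therefore has index differing from $i$ by at least $2$ (it cannot equal $i$ by choosing $q$ minimal among occurrences after $p$, or simply by picking the two occurrences to be consecutive occurrences of $s_i$). Hence each such $s_{i_j}$ satisfies $|i_j - i| \ge 2$ and commutes with $s_i$. Applying flip transformations repeatedly, I can slide the $s_i$ at position $p$ rightward past each intermediate generator until it sits adjacent to the $s_i$ at position $q$, producing a subword $s_i s_i$, which a deletion removes. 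This yields $w' \sim w$ with $\ell(w') = \ell(w) - 2 < \ell(w)$, so $w$ is not reduced.

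For the converse, suppose $w$ is not reduced; I must show it violates the stated property. Since $w$ is not reduced, there is a chain of elementary transformations from $w$ to some shorter word, and the length can only drop via a deletion. I would take a shortest such chain and examine the first transformation that is a deletion, say it removes a subword $s_i s_i$. The generators involved in that $s_i s_i$ trace back, through the flips performed before it (insertions before the first deletion can be assumed away by minimality, or handled by a small bookkeeping argument), to two occurrences of $s_i$ in the original $w$; and since flips only interchange commuting generators, every generator that was ever between them — in particular everything between them in $w$ — has index at distance $\ge 2$ from $i$, hence is neither $s_{i-1}$ nor $s_{i+1}$. This is exactly the negation of the property. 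The delicate point here is controlling insertions in the reducing chain; the cleanest fix is to observe that one may always first perform all needed flips, then all deletions (an insertion followed later by the matching deletion is vacuous, and an insertion whose letters are later separated and recombined can be rerouted), so that a length-decreasing chain may be taken to consist of flips followed by a single deletion.

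The main obstacle I anticipate is precisely this normalisation of the reducing chain in the converse: ruling out that insertions are genuinely needed to shorten $w$, or equivalently showing that if $w$ reduces to something shorter then it already admits a "flips-then-one-deletion" reduction. One way to sidestep it entirely is to induct on $\ell(w)$: if $w$ is not reduced, pick any length-decreasing chain, let the first deletion remove $s_is_i$; the prefix of the chain consisting of flips and insertions produces a word $u \sim w$ of length $\ell(w) + 2r$ (with $r$ insertions) containing an explicit $s_i s_i$; then a careful but routine tracking of which letters of $u$ descend from letters of $w$ versus from insertions shows the two relevant $s_i$'s, or an earlier cancelling pair forced by $r \ge 1$, witness a violation of the property in $w$. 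I expect the forward direction to be entirely routine and the converse to absorb essentially all the work.
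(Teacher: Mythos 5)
Your forward direction is correct and complete: taking two \emph{consecutive} occurrences of $s_i$ with no $s_{i\pm1}$ between them forces every intermediate letter to commute with $s_i$, so flips followed by one deletion shorten $w$. The converse, however, contains a genuine gap, and it is exactly the one you flag yourself: the claim that any length-decreasing chain of elementary transformations can be normalised to ``flips, then a single deletion'' is not a bookkeeping triviality but is essentially Tits' solution of the word problem for Coxeter groups (equivalently, the deletion condition), which is the entire mathematical content of the lemma. Your two proposed justifications do not close it. The phrase ``an insertion whose letters are later separated and recombined can be rerouted'' glosses over the hard case: after inserting $s_js_j$, flips may drive the two inserted letters apart and each may then cancel against a \emph{pre-existing} letter of $w$ in two different deletions; showing that such a chain never achieves a shortening unobtainable by flips-plus-deletions alone requires an actual inductive or confluence argument, not a rerouting remark. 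Likewise, ``a careful but routine tracking of which letters of $u$ descend from letters of $w$'' is precisely the non-routine part -- descent of letters is not even well defined once inserted and original copies of the same generator become adjacent and one of each is deleted.

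To make the converse rigorous you need one concrete input. Either invoke the deletion condition for Coxeter groups (if $w$ is not reduced, two letters $s_{i_p}, s_{i_q}$ can be deleted, and then argue within the right-angled setting that $i_p=i_q$ with all intermediate letters commuting with $s_{i_p}$), or prove directly that the length-non-increasing rewriting system with rules $s_is_i\to 1$ and $s_is_j \leftrightarrow s_js_i$ for $|i-j|\ge 2$ is terminating and locally confluent (Newman's lemma then gives unique normal forms, and the normal form of a word violating no instance of your condition is the word itself up to flips). Note that the paper itself states this lemma without proof, citing its companion paper, so there is no in-text argument to compare against; but as written your converse is an outline of the right strategy rather than a proof.
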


A \textit{cyclic permutation} of a word  $w=s_{i_1} s_{i_2}\dots s_{i_k}$ (not necessarily reduced) is a word $w'$ (not necessarily distinct from $w$) of the form $s_{i_t}s_{i_{t+1}} s_{i_{t+2}}\dots s_{i_k}s_{i_1} s_{i_2}\cdots s_{i_{t-1}}$ for some $1\leq t \leq k$. It is easy to see that $w'=(s_{i_1} s_{i_2}\dots s_{i_{t-1}})^{-1}w(s_{i_1} s_{i_2}\dots s_{i_{t-1}})$, that is, $w$ and $w'$ are conjugates of each other. A word $w$ is called \textit{cyclically reduced} if each cyclic permutation of $w$ is reduced. It is immediate that a cyclically reduced word is reduced,  but the converse is not true. For example, $s_1s_2s_1$ is reduced but not cyclically reduced.

\begin{corollary}\label{cor1}
Each word in $T_n$ is conjugate to some cyclically reduced word.
\end{corollary}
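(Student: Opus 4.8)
The plan is to induct on the length of the element of $T_n$ represented by $w$, using the two facts already at hand: every equivalence class of words contains a reduced word (well-ordering), and a cyclic permutation of a word is a conjugate of it (recorded in the paragraph preceding the statement). For the base case, a word of length $0$ or $1$ has no nontrivial cyclic permutations capable of being non-reduced, so it is already cyclically reduced.

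For the inductive step I would first replace $w$ by a reduced word representing the same element of $T_n$; this is harmless, since equal group elements are trivially conjugate and the length is unchanged. If the resulting reduced word is cyclically reduced, we are done. Otherwise, by definition some cyclic permutation $w' = s_{i_t} s_{i_{t+1}}\cdots s_{i_k} s_{i_1}\cdots s_{i_{t-1}}$ of $w$ fails to be reduced. As observed in the excerpt, $w' = (s_{i_1}\cdots s_{i_{t-1}})^{-1}\, w\, (s_{i_1}\cdots s_{i_{t-1}})$, so $w'$ is conjugate to $w$. Since $w'$ is not reduced, a finite chain of elementary transformations produces a word $w''$ with $w'' \sim w'$ and $\ell(w'') < \ell(w') = \ell(w)$; in particular $w''$ represents the same element of $T_n$ as $w'$, hence is conjugate to $w$.

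Since $\ell(w'') < \ell(w)$, the induction hypothesis gives a cyclically reduced word $v$ conjugate to $w''$, and transitivity of conjugacy yields that $w$ is conjugate to $v$, completing the argument. There is no real obstacle here; the only points that need care are verifying that a cyclic permutation is literally a conjugate (already done in the text) and that passing from a non-reduced word to a reduced representative strictly decreases length, which is exactly what makes the induction on length well-founded.
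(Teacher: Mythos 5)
Your argument is correct and is the standard induction on word length: a non-cyclically-reduced word has a non-reduced cyclic permutation, which is a conjugate of strictly smaller length once reduced, so the induction closes. The paper itself only quotes this corollary from \cite{NNS} without proof, and your reasoning is exactly the expected one, with the two key points (cyclic permutations are conjugates, and non-reduced words can be strictly shortened) correctly identified and justified.
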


\begin{theorem}\label{conditionforconjugate}
 Let $w_1, w_2$ be two cyclically reduced words in $T_n$. Then, $w_1$ is conjugate to $w_2$ if and only if they are cyclic permutation of each other modulo finitely many flip transformations.
\end{theorem}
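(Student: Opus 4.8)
The plan is to treat the two implications separately; the forward one is immediate, while the converse goes by a minimal-counterexample argument on the length of a conjugating element, powered by a structural lemma about conjugation by a single generator. For the forward direction, note that a flip changes neither the length nor the underlying element of $T_n$, whereas a cyclic permutation of a word $w$ has the form $u^{-1}wu$ for a prefix $u$ of $w$; hence if $w_2$ is obtained from $w_1$ by finitely many flips and cyclic permutations then $w_1$ and $w_2$ are conjugate in $T_n$. For the converse, call a triple $(u,v,g)$ \emph{bad} if $u,v$ are cyclically reduced words, $v=g^{-1}ug$ in $T_n$, and $v$ cannot be obtained from $u$ by finitely many flips and cyclic permutations --- a relation that is easily checked to be an equivalence relation on cyclically reduced words. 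Assuming bad triples exist, I would pick one with $\ell(g)$ minimal; then $\ell(g)=m\ge 1$, since $\ell(g)=0$ would force $u$ and $v$ to be reduced words for the same element and hence flip-equivalent. Fix a reduced expression $g=s_{j_1}\cdots s_{j_m}$.

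The key input is an elementary lemma, to be proved by a case analysis from Lemma~\ref{lem1} together with the fact that word length mod $2$ is a homomorphism $T_n\to\mathbb{Z}/2$ (which rules out the intermediate value $\ell(w)+1$): for a cyclically reduced word $w$ and a generator $s$, one has $\ell(sws)\in\{\ell(w),\ell(w)+2\}$; in the first case --- which occurs exactly when $w$ has a reduced expression beginning or ending with $s$, or $s$ commutes with every letter of $w$ --- the forced cancellation rewrites $sws$ as a word of length $\ell(w)$ that is a cyclic permutation of $w$ modulo flips and is again cyclically reduced, whereas in the second case $sws$ is already reduced and is \emph{not} cyclically reduced, since the cyclic permutation of $sws$ that moves the leading $s$ to the end reduces, by a single deletion, to the shorter word $w$.

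Now I would peel $g$ from both ends. Writing $g=s_{j_1}g'$ with $g'=s_{j_2}\cdots s_{j_m}$ (which is reduced, being a subword of $g$, of length $m-1$) gives $v=(g')^{-1}(s_{j_1}us_{j_1})g'$. If the first case of the lemma applies to $(u,s_{j_1})$, then $u':=s_{j_1}us_{j_1}$ is cyclically reduced and related to $u$ by flips and a cyclic permutation; since $\ell(g')<\ell(g)$, minimality forces $(u',v,g')$ to be non-bad, so $u'$, and hence $u$, is related to $v$ by flips and cyclic permutations --- contradicting badness. Thus the second case of the lemma must hold for $(u,s_{j_1})$. Writing instead $g=g''s_{j_m}$ gives $v=s_{j_m}\big((g'')^{-1}ug''\big)s_{j_m}$, so that $(g'')^{-1}ug''=s_{j_m}vs_{j_m}$; applying the lemma to $(v,s_{j_m})$ and using minimality in the same way forces its second case for $(v,s_{j_m})$ too. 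In particular $m\ge 2$, since if $m=1$ then $v=s_{j_1}us_{j_1}$ would be cyclically reduced, contrary to the second case of the lemma.

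What is left --- and I expect essentially all the difficulty to lie here --- is to rule out the surviving configuration: $g$ a minimal conjugator with $m\ge 2$, and the ``length-increasing'' case of the lemma holding both at $(u,s_{j_1})$ and at $(v,s_{j_m})$. Then $s_{j_1}$ is neither a left nor a right descent of $u$ and does not commute past all of $u$, and one must follow what becomes of the two new copies of $s_{j_1}$ at the ends of $s_{j_1}us_{j_1}$ as the generators $s_{j_2},\dots,s_{j_m}$ are successively conjugated in, using Lemma~\ref{lem1} to pin down precisely when a letter can be absorbed or produced, so as to extract a conjugator of length smaller than $m$ and contradict minimality. Alternatively, this final step can be carried out geometrically: realize the cyclically reduced words as closed twin diagrams on an annulus, interpret the conjugating element as an annular van Kampen diagram over the Coxeter presentation of $T_n$, and comb that diagram --- removing the far-commutativity square $2$-cells and rotating --- into a composite of flips and cyclic permutations. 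Either way, the careful combinatorial bookkeeping in this last case is the real content of the theorem.
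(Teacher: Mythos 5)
First, a point of comparison: the paper does not prove Theorem~\ref{conditionforconjugate} at all --- it is recalled from \cite{NNS} in the preliminaries alongside Lemma~\ref{lem1} and Corollary~\ref{cor1} --- so there is no in-paper argument to measure your attempt against, and it must stand on its own. It does not: what you have written is a reduction, not a proof. The forward implication is fine; the lemma that $\ell(sws)\in\{\ell(w),\ell(w)+2\}$ for cyclically reduced $w$ is correct (and your use of cyclic reducedness to exclude $\ell(w)-2$ is the right observation); and the two peeling steps, together with the minimality argument that disposes of the ``first case'' at either end and of $m=1$, are sound. But the configuration that survives --- $v=g^{-1}ug$ with $g$ a geodesic conjugator of minimal length $m\ge 2$, $\ell(s_{j_1}us_{j_1})=\ell(u)+2$ and $\ell(s_{j_m}vs_{j_m})=\ell(v)+2$ --- is exactly the case in which the single-letter peeling strategy stalls, and you explicitly decline to handle it, offering only two candidate strategies (tracking the two new end letters through the remaining conjugations, or combing an annular van Kampen diagram) without executing either. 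As you yourself say, this is ``the real content of the theorem''; nothing you have written rules the configuration out, so the converse implication is not established.

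To close the gap you would likely do better to abandon one-letter peeling and argue on the whole word at once: since $\ell(v)=\ell(g^{-1}ug)$ while the concatenation $g^{-1}\cdot u\cdot g$ has $\ell(u)+2m$ letters, the deletion condition forces $2m$ letters to cancel in pairs, and in a right angled Coxeter group each cancelling pair consists of two occurrences of the same generator separated only by letters commuting with it. One then shows that a pair matching a letter of $g^{-1}$ with a letter of $g$ lets you delete that letter from $g$ outright (shortening the conjugator), while a pair matching a letter of $g$ with a letter of $u$ realizes a cyclic shift of $u$ modulo flips; induction on $m$ then finishes the argument. Until an argument of this kind (or a completed version of one of your two sketches) is supplied, the proposal proves only the easy direction.
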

\bigskip
\subsection{Doodles on a $2$-sphere}
A \textit{doodle} on a $2$-sphere with $m$ components is a collection of $m$ piecewise linear closed curves without triple or higher intersection points. Two doodles are said to be \textit{equivalent} if one can be obtained from the other by an isotopy of the $2$-sphere and a finite sequence of local moves $R_1$ and $R_2$ as shown in the Figure \ref{figure2}. 
\begin{figure}[hbtp]
\centering
\includegraphics[scale=.3]{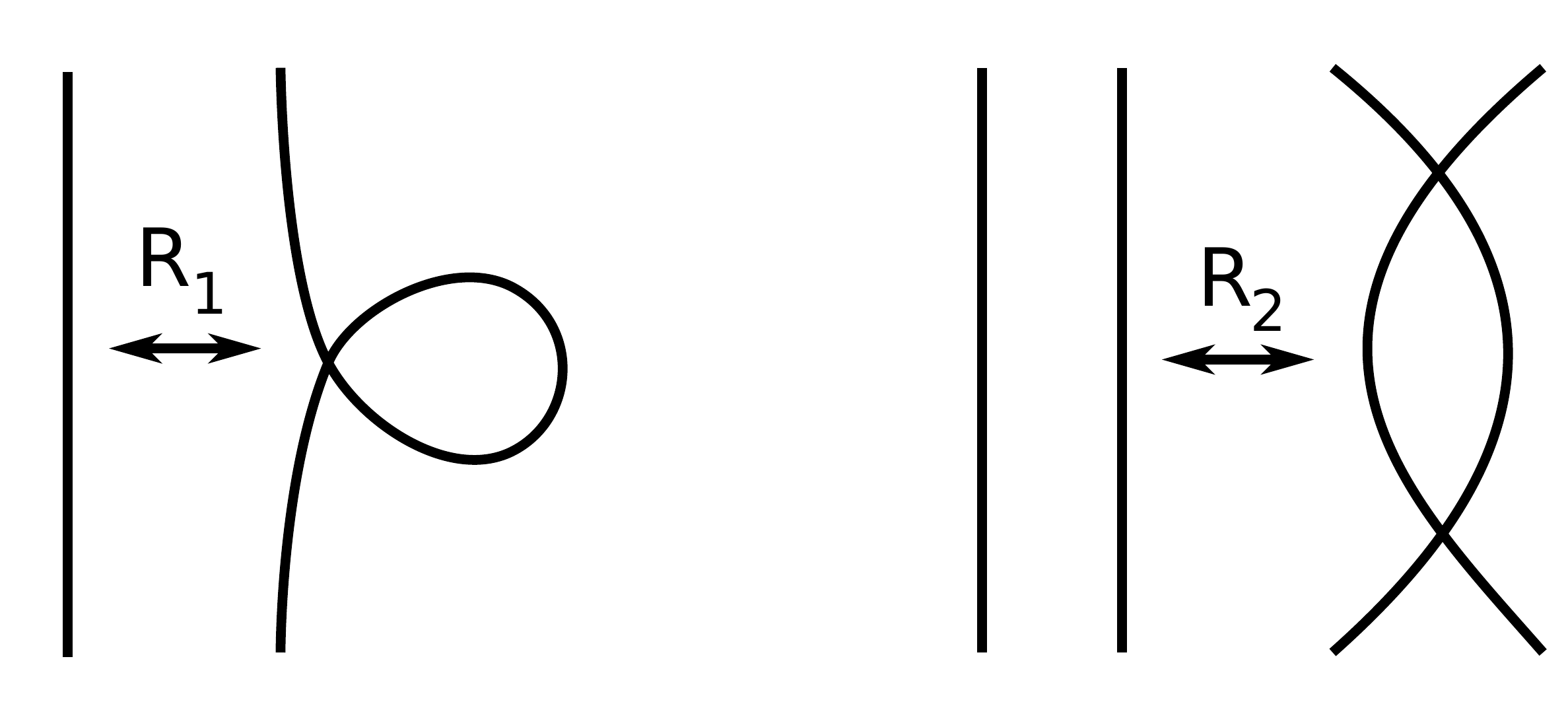}
\caption{The $R_1$ and $R_2$ moves.}
\label{figure2}
\end{figure}
An \textit{oriented doodle} is a doodle with orientation on each of its components. By the \textit{closure} of a twin $\beta$, we mean a diagram obtained by joining the end points of $\beta$ on a $2$-sphere (by taking the one point compactification of the plane) as illustrated in Figure \ref{figure3}.\\

\begin{figure}[hbtp]
\centering
\includegraphics[scale=1]{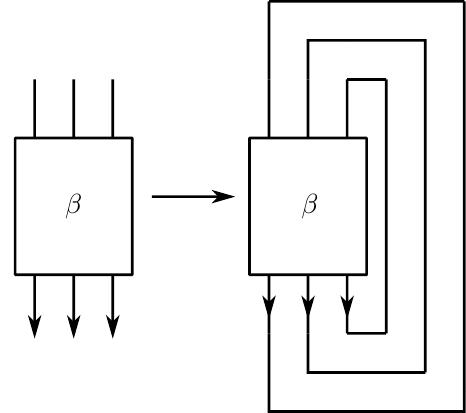}
\caption{The closure of a twin $\beta$.}
\label{figure3}
\end{figure}

It is easy to check that the closure of a twin is a doodle. Further, an orientation on a twin gives an orientation on its closure. The analogue of the classical Alexander theorem in this setting has been established in \cite[Theorem 2.1]{Khovanov}.

\begin{theorem}
Every oriented doodle on a $2$-sphere is the closure of some twin.
\end{theorem}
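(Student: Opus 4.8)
The plan is to follow Alexander's braiding argument, adapted to the over/under-free planar setting. Fix a piecewise linear representative of the given oriented doodle $D$, view the $2$-sphere as $\mathbb{R}^2\cup\{\infty\}$ with $\infty\notin D$ so that $D\subset\mathbb{R}^2$, and pick a point $O\in\mathbb{R}^2\setminus D$ to serve as the ``axis''. After a small perturbation we may assume $D$ is in general position with respect to the rays from $O$: no edge of $D$ is radial, the double points of $D$ lie on distinct rays, and each ray meets $D$ transversally in finitely many points. Writing $\theta$ for the angular coordinate around $O$, call an edge of $D$ \emph{positive} if $\theta$ increases along it in the direction of the orientation, and \emph{negative} otherwise (note $\theta$ is automatically strictly monotone along a non-radial straight edge). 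The first claim is that if $D$ has no negative edge then $D$ is already the closure of a twin: local extrema of $\theta|_D$ occur exactly at vertices where the incident edges change sign, so without negative edges $\theta|_D$ has no critical point, hence $D$ is transverse to every ray and winds monotonically around $O$; cutting $\mathbb{S}^2$ along a ray from $O$ to $\infty$ that avoids the double points turns $D$ into a collection of $\theta$-monotone arcs in a disk in which only adjacent strands cross, i.e.\ a twin whose closure is $D$. So it suffices to remove all negative edges.

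The reduction is the following \emph{braiding move}. Given a negative edge $e=AB$, choose a point $C$ so that the triangle $ABC$ contains $O$ in its interior, and replace $e$ by the two edges $AC$ and $CB$. The loop $A\to C\to B\to A$ winds once around $O$, so, since $e$ traversed forward is clockwise around $O$, the path $A\to C\to B$ is counterclockwise around $O$; perturbing $C$ we keep $AC,CB$ non-radial and (taking $C$ roughly opposite $e$ through $O$) can ensure $\theta$ increases along each of them, so the single negative edge has been traded for two positive edges. When the triangle $ABC$ is disjoint from the rest of $D$, this replacement is realised by an ambient isotopy of $\mathbb{S}^2$ supported near the triangle, and it strictly decreases the number of negative edges of $D$. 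Iterating and inducting on that number then finishes the proof --- \emph{provided} the triangle can always be arranged to be clear of $D$.

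This proviso is the crux, and it is exactly where the doodle case departs from the braid case: for links one routes the arc $A\to C\to B$ over or under any strand of $D$ meeting the triangle $ABC$, which is legitimate because the extra dimension records the over/under datum, whereas a doodle records no such datum, so passing through an interfering strand creates new double points and changes the doodle. The remedy is to do the braiding delicately. Subdividing $e$ into short edges makes the relevant triangles small, and one chooses $O$ inside a complementary region of $D$ so as to minimise the part of $D$ separating the chosen edge from $O$; the residual difficulty is a strand $\beta$ of $D$ running close to $O$, which then lies in every small triangle at that corner. Here one exploits that $\beta$ has a $\theta$-monotone sub-arc near $O$: a negative such sub-arc can be braided across $O$ first (again lowering the count of negative edges), while positive ones are pushed radially off $O$, and after finitely many such steps the corner at $O$ is clear and the move on $e$ goes through. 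Showing that this reorganisation can always be carried out without ever increasing the number of negative edges --- so that the outer induction genuinely closes --- is the delicate heart of the proof; an alternative, in the spirit of Vogel's braiding algorithm, is to induct on the number of double points of $D$ and modify $D$ near an incoherent pair of strands. Once $D$ is free of negative edges, the first paragraph exhibits it as the closure of a twin.
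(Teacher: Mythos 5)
The statement you are proving is not proved in this paper at all: it is quoted verbatim from Khovanov \cite[Theorem 2.1]{Khovanov}, so there is no in-paper argument to compare yours against and your proof has to stand entirely on its own. It does not. Your outline is the standard Alexander braiding scheme (choose an axis $O$, induct on the number of angularly ``negative'' edges, replace a negative edge $AB$ by $AC\cup CB$ with $O$ inside the triangle $ABC$), and the first paragraph together with the braiding move in the clean case is fine. But you yourself locate the crux --- carrying out the move when the triangle is not disjoint from the rest of $D$ --- and then you do not prove it: you offer a strategy (subdivide $e$, choose $O$ in a good complementary region, braid interfering negative sub-arcs first, push positive ones ``radially off $O$''), concede that verifying this never increases the count of negative edges is ``the delicate heart of the proof,'' and gesture at an unexecuted Vogel-style alternative. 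An induction whose inductive step is announced rather than established is a gap, and here it is precisely the step where the doodle setting genuinely differs from the link setting.

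Two further points on the substance of that step. First, your diagnosis of the obstruction is slightly off: sweeping the new arc transversally across a \emph{strand} of $D$ only creates or destroys a bigon, i.e.\ is an $R_2$ move, which by the paper's definition does not change the doodle. The real obstruction is sweeping across an existing \emph{double point} of $D$ lying inside the triangle $ABC$: since $AB\cup CB\cup CA$ bounds that triangle, any homotopy rel endpoints from $AB$ to $ACB$ must pass over each interior point an odd number of times, so it forces a momentary triple point --- exactly the move forbidden for doodles. Your remedy must therefore be organised around clearing the triangle of \emph{double points}, not of strands, and nothing in your sketch controls where the double points of $D$ sit relative to the triangles you use, nor rules out that ``pushing a positive strand radially off $O$'' drags its double points with other strands across the arcs being created. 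Second, the subdivision-plus-reordering procedure has no termination argument: braiding an interfering negative sub-arc of $\beta$ first changes the configuration near $O$ and may create new interference for the edge $e$ you originally wanted to treat. Until you specify a complexity that strictly decreases under every step of the reorganisation and show each step is realised by isotopy, $R_1$ and $R_2$ only, the proof is incomplete.
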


For twins $\alpha$ and $\beta$ (possibly with different number of strands), we denote by $\alpha \otimes \beta$, the twin obtained by adding the diagram of $\alpha$ to the left of the diagram of $\beta$. For any positive integer $n$ and $\beta \in T_n$, define the following moves:
\begin{itemize}
\item[$\M_1$] : $\beta \otimes \I$ $\to$ $\I \otimes ~\beta$,

\item[$\M_2$] :  $\beta$ $\to$ $\alpha^{-1} \beta \alpha$, 
\item[$\M_3$] :  $\beta$ $\to$ $(\beta \otimes I)s_ns_{n-1} \dots s_{i+1}s_{i}s_{i+1} \dots s_{n-1}s_{n}$,
\item[$\M_4$] :  $\beta$ $\to$ $(I \otimes \beta)s_1s_2 \dots s_{i-1}s_{i}s_{i-1} \dots s_2s_1$,
\end{itemize}
where $\I$ is a twin with only one strand, $\alpha \in T_n$ and $s_i \in T_{n+1}$.\\
Two twins are said to be \textit{$M$-equivalent} if one can be obtained from the other by a finite sequence of moves $\M_1 - \M_4$ and their inverses.

\begin{figure}[hbtp]
\centering
\includegraphics[scale=.2]{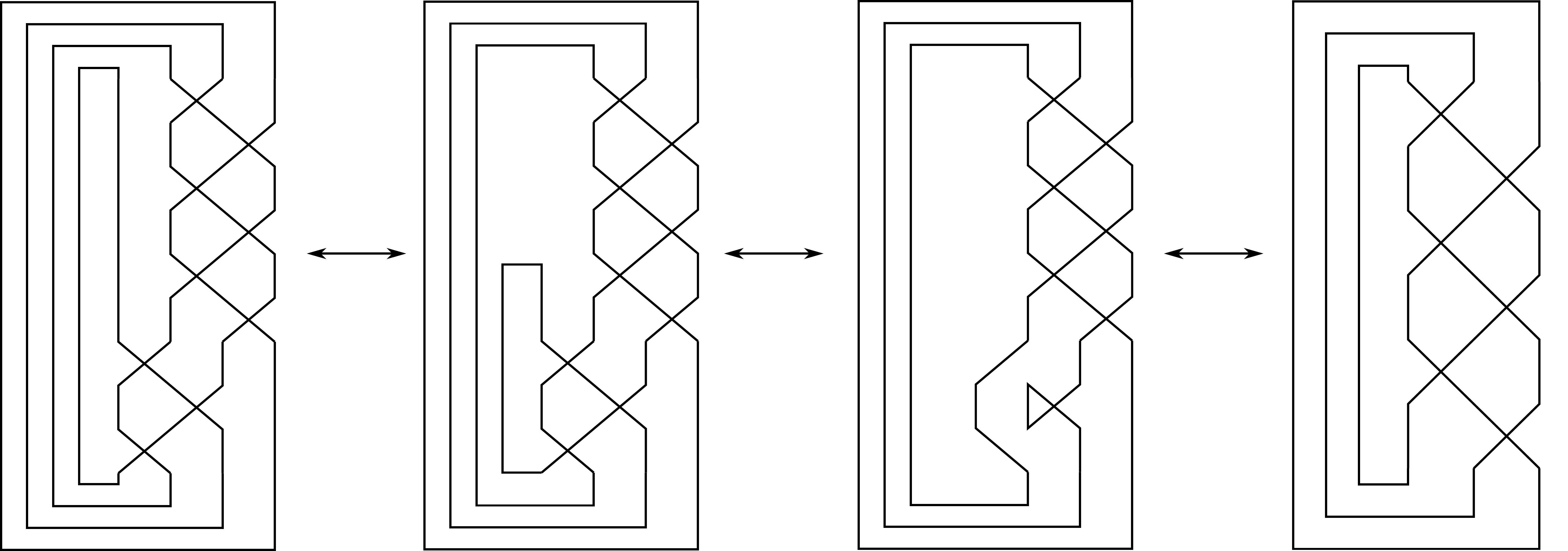}
\caption{The closures of $(s_1s_2)^3$ and $(s_2s_3)^3s_1s_2s_1$ being equivalent as doodles.}
\label{figure4}
\end{figure}

If $\M_i(\beta)$ is the twin obtained from $\beta$ by applying the $\M_i$-move, then it is easy to check that the closure of $\M_i(\beta)$ is equivalent to the closure of $\beta$. For example, the closure of $(s_1s_2)^3 \in T_3$ and the closure of $(s_2s_3)^3s_1s_2s_1 \in T_4$ are equivalent by $\M_4$-move as shown in Figure \ref{figure4}. The converse has been established by Gotin  \cite[Theorem 4.1]{Gotin}.
\begin{theorem}
Any two twins with equivalent closures are $M$-equivalent.
\end{theorem}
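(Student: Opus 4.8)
The forward implication is exactly the observation recorded just above the statement: applying any one of $\M_1$–$\M_4$ to $\beta$ does not change the equivalence class of its closure, so $M$-equivalent twins have equivalent closures. For the converse, suppose $\beta_1\in T_n$ and $\beta_2\in T_m$ have equivalent closures. By definition of doodle equivalence there is a finite sequence $D_0,D_1,\dots,D_k$ of doodle diagrams on $S^2$ with $D_0$ the closure of $\beta_1$, $D_k$ the closure of $\beta_2$, and each $D_{j+1}$ obtained from $D_j$ by an ambient isotopy of $S^2$ or by one of the local moves $R_1$, $R_2$. Fix a basepoint $\ast\in S^2$ (the point at infinity) to serve as a ``twin axis'': a diagram is the closure of a twin precisely when every arc is monotone with respect to the radial coordinate around $\ast$ and all crossings occur at distinct radii. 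Using Khovanov's analogue of the Alexander theorem for doodles (the theorem stated just before the $\M_i$-moves are defined), I would isotope each $D_j$ into such closed-twin form, say $D_j$ becomes the closure of a twin $\gamma_j$ with $\gamma_0=\beta_1$ and $\gamma_k=\beta_2$. The problem then reduces to two tasks: \textbf{(A)} if two twins have closures that coincide as closed-twin diagrams up to an isotopy of $S^2$ respecting the closed-twin structure, then they are $M$-equivalent; and \textbf{(B)} each $R_1$ or $R_2$ move performed inside a closed-twin diagram is realized by $\M_1$–$\M_4$ and their inverses.

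For \textbf{(B)}, an $R_2$ move creates or removes a bigon; after a conjugation $\M_2$ placing the bigon in standard position, this is the insertion or deletion of a subword $s_is_i$, which is already an equality of elements in $T_n$ (a preliminary stabilization $\M_3$/$\M_4$ is needed only when the bigon encircles $\ast$). An $R_1$ move creates or removes a simple loop; a monotone loop threaded once around $\ast$ is exactly the ``cap word'' $s_ns_{n-1}\cdots s_{i+1}s_is_{i+1}\cdots s_{n-1}s_n$ occurring in $\M_3$ (or its mirror in $\M_4$), so an $R_1$ move is a single stabilization or destabilization together with an $\M_1$ and possibly an $\M_2$. Throughout, Corollary \ref{cor1} and Theorem \ref{conditionforconjugate} let one pass to cyclically reduced representatives and recognise the conjugacy steps.

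For \textbf{(A)} — which I expect to be the main obstacle — the point is to control the twin during an arbitrary isotopy of the doodle on $S^2$. An isotopy that preserves monotonicity at every instant merely cyclically permutes and flips the underlying reduced word, i.e. changes the twin only by a conjugation $\M_2$ and the built-in Coxeter relations. A general isotopy is a concatenation of such monotone pieces separated by finitely many instants at which a single arc momentarily acquires a radial local maximum or sweeps across $\ast$; the crux is to show that each such instant can be absorbed by rerouting the offending strand to the outermost position — costing one spherical move $\M_1$ and, if necessary, a conjugation $\M_2$ — followed by a stabilization $\M_3$ or $\M_4$ and then its inverse, with no further moves hidden. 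This is the doodle counterpart of the delicate ``Markov moves suffice'' step in the proof of the classical Markov theorem for links, and here one must additionally verify that the monotonizing finger moves never produce a forbidden triple point; the absence of an $R_3$ move both simplifies the bookkeeping and makes this triple-point constraint a genuine condition to check. A clean way to organise the whole argument is to first prove (A) purely within the language of closed twins and the elementary isotopy-moves between them, via a normal-form argument built on Lemma \ref{lem1} and Theorem \ref{conditionforconjugate}, and only then feed in the $R_1$ and $R_2$ moves from (B).
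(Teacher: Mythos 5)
You should first be aware that the paper does not prove this statement at all: it is Gotin's Markov theorem for doodles, quoted verbatim with the citation \cite[Theorem~4.1]{Gotin}, so there is no in-paper argument to measure your attempt against. The question is therefore only whether your proposal stands on its own, and it does not: it is a plausible outline of the classical Markov-theorem strategy, but the two steps you label \textbf{(A)} and \textbf{(B)} --- which you yourself identify as carrying all the content --- are announced rather than carried out. Phrases like ``I would isotope each $D_j$,'' ``the crux is to show,'' and ``which I expect to be the main obstacle'' mark exactly the places where the proof has to happen and where nothing is yet proved.

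Beyond incompleteness, the reduction itself has a hole. Khovanov's Alexander theorem only gives you the \emph{existence} of some twin $\gamma_j$ whose closure is equivalent to $D_j$; it does not give you a canonical one, nor any control relating $\gamma_j$ to $\gamma_{j+1}$. To run your induction along $D_0,\dots,D_k$ you must prove that the monotonization procedure yields a twin that is well defined up to $M$-equivalence independently of all choices (position of the basepoint $\ast$, order in which arcs are monotonized, placement of the new crossings), and that diagrams differing by one $R_1$, $R_2$ or isotopy yield $M$-equivalent outputs. That well-definedness statement is essentially the entire theorem, so as written the argument is close to circular. Moreover, statement \textbf{(A)} is strictly weaker than what the induction needs: consecutive $\gamma_j$'s have closures related by an \emph{arbitrary} isotopy of $S^2$ plus one local move, not by an isotopy respecting the closed-twin structure. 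Finally, your identification of an $R_1$ kink with the cap word $s_ns_{n-1}\cdots s_{i+1}s_is_{i+1}\cdots s_{n-1}s_n$ glosses over precisely the point that makes the doodle case harder than the braid case: with no triple-point move available, pulling a loop around $\ast$ forces it to weave through the intervening strands, and verifying that this weaving is achievable without creating forbidden triple points is a genuine geometric lemma, not a bookkeeping remark. In short, this is a reasonable plan for reproving Gotin's theorem, but not a proof.
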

\medskip

\section{Algebraic doodle problem}\label{AlgebraicDoodleProblem}
In case of classical links and braids, the algebraic link problem asks whether given two braids are
equivalent under the classical Markov moves. The algebraic doodle problem can be formulated
along the similar lines, that is, to determine whether two twins are equivalent under the Markov
moves $\M_1-\M_4$.
\par 
It is easy to note that the $\M_1$-move is equivalent to saying that whenever the reduced expression of $\alpha = s_{i_1}s_{i_2}\dots s_{i_k} \in T_{n+1}$ does not contain $s_n$, we can replace $\alpha$ by $\I \otimes \alpha = s_{{i_1}+1}s_{{i_2}+1}\dots s_{{i_k}+1}$. Next, checking the equivalence of twins under the $\M_2$-move is same as the conjugacy problem which is solvable in $T_n$ \cite{Krammer, NNS}. Thus, we consider the moves $\M_3$ and $\M_4$ and prove the following result.

\begin{theorem}\label{MainTheoremAlgebraicDoodleProblem}
Given a twin $\alpha \in T_{n+1}$, there is an algorithm to determine whether
\begin{itemize}
\item[(1)] $\alpha$ can be written as $(\beta \otimes \I)s_ns_{n-1} \dots s_{i+1}s_{i}s_{i+1} \dots s_{n-1}s_{n}$,
\item[(2)] $\alpha$ can be written as $(I \otimes \beta)s_1s_2 \dots s_{i-1}s_{i}s_{i-1} \dots s_2s_1$,
\end{itemize}
for some $\beta \in T_n$ and $1\le i\le n$.
\end{theorem}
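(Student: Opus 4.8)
The plan is to turn each of the two questions into a membership test for a standard parabolic subgroup of $T_{n+1}$, which is decidable because the structure of reduced words in a twin group is completely understood (Section \ref{basic}). The key elementary observation is that the words
$$\gamma_i \;:=\; s_n s_{n-1} \cdots s_{i+1} s_i s_{i+1} \cdots s_{n-1} s_n \qquad\text{and}\qquad \delta_i \;:=\; s_1 s_2 \cdots s_{i-1} s_i s_{i-1} \cdots s_2 s_1$$
in $T_{n+1}$ are palindromes in the involutions $s_j$, so $\gamma_i^{-1}=\gamma_i$ and $\delta_i^{-1}=\delta_i$. Hence $\alpha=(\beta\otimes\I)\gamma_i$ for some $\beta\in T_n$ exactly when $\alpha\gamma_i=\beta\otimes\I$, and $\alpha=(\I\otimes\beta)\delta_i$ exactly when $\alpha\delta_i=\I\otimes\beta$. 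Now $\beta\otimes\I$ is precisely the image of $\beta$ under the natural inclusion $T_n\hookrightarrow T_{n+1}$ onto the parabolic subgroup $\langle s_1,\dots,s_{n-1}\rangle$, while $\I\otimes\beta$ is the image of $\beta$ under the inclusion onto $\langle s_2,\dots,s_n\rangle$ (shifting every generator index up by one). So for a fixed $i$, assertion (1) holds iff $\alpha\gamma_i\in\langle s_1,\dots,s_{n-1}\rangle$, and assertion (2) holds iff $\alpha\delta_i\in\langle s_2,\dots,s_n\rangle$.

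Next I would settle the parabolic membership test. By the discussion preceding Lemma \ref{lem1} together with the results of \cite{NNS}, there is an effective procedure that converts any word of $T_{n+1}$ into an equivalent reduced word using only deletion and flip transformations, and any two reduced words for a given element differ by finitely many flips; in particular the set of generators occurring in a reduced word (its \emph{support}) depends only on the element. I then claim that $w\in\langle s_1,\dots,s_{n-1}\rangle$ if and only if $s_n$ does not occur in a (equivalently, in every) reduced word for $w$, and likewise $w\in\langle s_2,\dots,s_n\rangle$ if and only if $s_1$ does not occur. One direction is immediate; for the other, if $w$ is represented by some word $v$ in $s_1,\dots,s_{n-1}$, then running the reduction procedure on $v$ never leaves this generating set and produces a reduced word for $w$ avoiding $s_n$, so by the support invariance every reduced word for $w$ avoids $s_n$.

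Finally I would assemble the algorithm. For part (1): for each $i=1,\dots,n$, form the concatenation $\alpha\gamma_i$, reduce it to a reduced word $w_i$, and check whether $s_n$ occurs in $w_i$; output ``yes'' (with witnesses $i$ and $\beta\in T_n$ the element represented by $w_i$, for which $(\beta\otimes\I)\gamma_i=\alpha\gamma_i^2=\alpha$) as soon as some $w_i$ avoids $s_n$, and ``no'' if every $w_1,\dots,w_n$ contains $s_n$. Part (2) is identical with $\delta_i$ replacing $\gamma_i$ and $s_1$ replacing $s_n$, the witness $\beta$ now obtained from $w_i$ by lowering every index by one. Since only $n$ values of $i$ are inspected and each reduction terminates, the procedure terminates. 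The one point requiring care is the justification of the parabolic membership criterion and of the reduction procedure, but both are direct consequences of the normal-form theory for twin words recalled in Section \ref{basic}, so beyond this bookkeeping there is no genuine obstacle.
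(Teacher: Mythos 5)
Your argument is correct, and it takes a genuinely different route from the paper. The paper works directly with a reduced word for $\alpha$ and does a case analysis on the number of occurrences of $s_n$ (respectively $s_1$): one occurrence, two occurrences (in which case it searches for the explicit subword $s_n s_{n-1}\cdots s_i\cdots s_{n-1}s_n$ and inspects what is allowed to its right), or three or more (impossible). You instead exploit the fact that $\gamma_i$ and $\delta_i$ are palindromes in involutions, hence involutions themselves, so that the decomposition question becomes the single clean equivalence $\alpha=(\beta\otimes\I)\gamma_i \iff \alpha\gamma_i\in\langle s_1,\dots,s_{n-1}\rangle$, and then you decide parabolic membership by the support of a reduced word. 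What your approach buys is robustness and uniformity: you never have to locate a particular subword inside one chosen reduced expression (a step in the paper's proof that implicitly requires searching over flip-equivalent forms), the bound on the number of $s_n$'s comes out for free rather than being a separate case, and the whole algorithm is a finite loop over $i$ with one word reduction and one support check per iteration. What the paper's approach buys is that it reads the answer directly off a reduced form of $\alpha$ itself, without forming the products $\alpha\gamma_i$. Both arguments rest on the same normal-form inputs from Section \ref{basic} (effective reduction via deletions and flips, Lemma \ref{lem1}, and the fact that reduced words for a fixed element differ only by flips, which gives well-definedness of the support), so the foundational burden is identical; your two auxiliary claims (the effective reduction procedure and the support criterion for parabolic membership) are standard for right-angled Coxeter groups and are justified exactly as you indicate.
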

\begin{proof}

{{Case (1).}} We determine whether $\alpha \in T_{n+1}$ can be written as $(\beta \otimes \I)s_ns_{n-1} \dots s_{i+1}s_{i}s_{i+1} \dots s_{n-1}s_{n}$, for some $\beta \in T_n$ and $1 \le i \le n$. Upon applying Lemma \ref{lem1}, we get a reduced word equivalent to $\alpha$ and have the following possibilities:
\begin{itemize}
\item[(i)] If there is only one $s_n$ present in the reduced expression of $\alpha$, then we can write $\alpha$ as $\alpha's_n\alpha''$, where $\alpha', \alpha'' \in T_n$. Such an $\alpha$ can be written in the desired form if and only if  there is no $s_{n-1}$ present in $\alpha''$.

\item[(ii)] Suppose that there are two $s_n$'s present in the reduced expression of $\alpha$. If the expression does not have a subword of the form $s_ns_{n-1} \dots s_{i+1}s_{i}s_{i+1} \dots s_{n-1}s_{n}$ for any $1 \le i \le n-1$, then we cannot write $\alpha$ in the desired form. On the other hand, if the reduced expression of $\alpha$ can be written as $\alpha's_ns_{n-1} \dots s_{i+1}s_{i}s_{i+1} \dots s_{n-1}s_{n}\alpha''$, then $\alpha$ has the desired form if and only if $\alpha''$ is a word in $s_j$ for $1 \le j \le i-2$.

\item[(iii)] If the number of $s_n$'s present in the expression is greater than equal to $3$, then  we cannot write $\alpha$ in the desired form. For, if we get a subword of the form $s_ns_{n-1} \dots s_{i+1}s_{i}s_{i+1} \dots s_{n-1}s_{n}$ for some $i$ and we move this subword to the rightmost position in the reduced expression of $\alpha$ by flip transformations, there will be an $s_n$ present in the expression of $\beta$ which is not possible since $\beta \in T_n$.\\
\end{itemize}

{{Case (2).}} It is easy to note that if $\beta \in T_n$ is written as $s_{i_1} s_{i_2}\dots s_{i_k}$, then $I \otimes \beta = s_{{i_1}+1} s_{{i_2}+1} \dots s_{{i_k}+1} \in T_{n+1}$. We determine whether $\alpha \in T_{n+1}$ can be written as $\alpha = (I \otimes \beta)s_1s_2 \dots s_{i-1}s_{i}s_{i-1} \dots s_2s_1$, for some $\beta \in T_n$ and $1\le i\le n$. On applying Lemma \ref{lem1}, we get a reduced word equivalent to $\alpha$ and  have the following possibilities:
\begin{itemize}

\item[(i)] If there is only one $s_1$ present in the reduced expression of $\alpha$, then we can write $\alpha$ as $\alpha's_1\alpha''$, where $\alpha', \alpha'' \in T_{n+1}$. Such an $\alpha$ can be written in the desired form if and only if  there is no $s_{2}$ present in the expression of $\alpha''$.

\item[(ii)] Suppose that there are two $s_1$'s present in the reduced expression of $\alpha$. If the expression does not have a subword of the form $s_1s_2 \dots s_{i-1}s_{i}s_{i-1} \dots s_2s_1$ for any $2 \le i \le n$, then we cannot write $\alpha$ in the desired form. On the other hand, if we can write reduced expression of $\alpha$ as $\alpha's_1s_2 \dots s_{i-1}s_{i}s_{i-1} \dots s_2s_1\alpha''$, then $\alpha$ has the desired form if and only if $\alpha''$ is a word in $s_j$ for $i+2 \le j \le n$.

\item[(iii)] If number of $s_1$'s present in the expression is greater than equal to $3$, then we cannot write $\alpha$ in the desired form. For, if we get a subword of the form $s_1s_2 \dots s_{i-1}s_{i}s_{i-1} \dots s_2s_1$ for some $i$, there will be an $s_1$ present in the expression of $\I \otimes \beta$ which is not possible, since it is an expression in $s_2, s_3, \dots , s_n$.
\end{itemize}
\end{proof}

We now define split doodles and split twins analogous to split links and braids. A doodle on a $2$-sphere is said to be \textit{split} if it contains two disjoint open disks each containing at least one component of the doodle. We define a twin to be \textit{split} if its closure is a split doodle on a $2$-sphere. The following figure is an example of a split doodle which is the closure of a twin $(s_1s_2)^3(s_4s_5)^4$.
\begin{figure}[hbtp]\label{figure}
\centering
\includegraphics[scale=3.5]{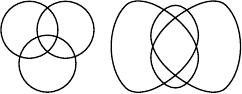}
\caption{A split doodle}
\end{figure}

For each $1 \le i\le n-1$, let $T_n^i$ be the subgroup of $T_n$ generated by $\{s_1, s_2, \dots, s_{i-1}, s_{i+1}, \dots, s_{n-1}\}$. The following proposition, whose proof is immediate, gives sufficient conditions for a twin to be split.
\begin{proposition}
If $\alpha \in T_n$ satisfy one of the following conditions:
\begin{enumerate}
\item $\alpha$ is conjugate to a word in $T_n^i$ for some $1 \le i \le n-1$,
\item $\alpha = (\beta \otimes \I)s_{n-1}s_{n-2} \dots s_{j+1}s_{j}s_{j+1} \dots s_{n-2}s_{n-1}$, where $\beta$ is conjugate of a word in $T_{n-1}^i$, $1 \le  i \le n-2$ and  $1 \le j \le n-1$,
\item $\alpha = (\I \otimes \beta)s_1s_2 \dots s_{j-1}s_{j}s_{j-1} \dots s_2s_1$, where $\beta$ is conjugate of a word in $T_{n-1}^i$, $1 \le i \le n-2$ and  $1 \le j \le n-1$,
\end{enumerate}
then $\alpha$ is a split twin.
\end{proposition}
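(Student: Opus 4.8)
The plan is to prove condition~(1) directly from the geometry of the closure and then to deduce (2) and (3) from it.

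First, for condition~(1): a word $w$ representing an element of $T_n^i$ involves only the generators $s_1,\dots,s_{i-1}$ and $s_{i+1},\dots,s_{n-1}$, and any generator from the first family far-commutes with any generator from the second, since their indices differ by at least $2$. Applying flip transformations we may therefore rewrite $w=\gamma\,\delta$, where $\gamma$ is a word in $s_1,\dots,s_{i-1}$ and $\delta$ is a word in $s_{i+1},\dots,s_{n-1}$; equivalently, no arc of the diagram of $w$ crosses the vertical line separating the strand $i$ from the strand $i+1$. Viewing $s_1,\dots,s_{i-1}$ as the standard generators of $T_i$ on the strands $1,\dots,i$ and $s_{i+1},\dots,s_{n-1}$ as the standard generators of a copy of $T_{n-i}$ on the strands $i+1,\dots,n$, this says precisely that $w=\gamma\otimes\delta$ with $\gamma\in T_i$ and $\delta\in T_{n-i}$. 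Its closure can then be drawn with one disk enclosing the closure of $\gamma$ and a second, disjoint, disk enclosing the closure of $\delta$ --- exactly the picture in the figure of a split doodle above --- so it is a split doodle. Finally, passing from $w$ to an arbitrary conjugate $\alpha$ is an instance of the move $\M_2$, which does not change the equivalence class of the closure; hence the closure of $\alpha$ is equivalent to a split doodle and $\alpha$ is split.

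Next, for (2) and (3): I would recognise the displayed words as outputs of $\M_3$ and $\M_4$. Substituting $n-1$ for $n$ and $j$ for $i$ in the definition of $\M_3$ shows that the element in (2) equals $\M_3(\beta)$ for $\beta\in T_{n-1}$, and likewise the element in (3) equals $\M_4(\beta)$. Since every move $\M_k$ preserves the equivalence class of the closure, the closure of $\alpha$ is equivalent to the closure of $\beta$, and $\beta$ --- being conjugate to a word in $T_{n-1}^i$ --- has split closure by case~(1); hence $\alpha$ is split in both cases.

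The only point that is not purely formal, and the reason the statement is nonetheless ``immediate'', is that being split is a property of the doodle equivalence class: a doodle equivalent to a split one is again split. This is what lets splitness be carried across the $\M_2$-conjugation in (1) and across the $\M_3$/$\M_4$-reductions in (2) and (3). It holds because an ambient isotopy of the $2$-sphere carries the two separating disks along, while any $R_1$ or $R_2$ move occurs inside an arbitrarily small disk that, after slightly shrinking one separating disk if need be, lies in one of the two disks or in their common complement. I do not expect any genuine obstacle beyond this bookkeeping.
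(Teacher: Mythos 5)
The paper itself gives no argument here---it simply declares the proof immediate---and the core of your write-up is exactly the intended one. Since any generator from $\{s_1,\dots,s_{i-1}\}$ and any generator from $\{s_{i+1},\dots,s_{n-1}\}$ have indices differing by at least $2$, flip transformations rewrite a word of $T_n^i$ as $\gamma\otimes\delta$ with $\gamma\in T_i$ and $\delta\in T_{n-i}$, whose closure visibly lies in two disjoint disks (this is precisely the paper's figure of the closure of $(s_1s_2)^3(s_4s_5)^4$); and the words in conditions (2) and (3) are literally $\M_3(\beta)$ and $\M_4(\beta)$ after the index shift $n\mapsto n-1$, so Gotin's theorem (or a direct check that the $\M$-moves preserve the closure up to doodle equivalence) reduces them to case (1). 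That part is correct and complete.

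The one place you overreach is the final paragraph. The claim that a doodle \emph{diagram} equivalent to a literally split diagram is itself literally split is false, and your justification breaks down in the only interesting case: an $R_2$ insertion applied to one arc of a component contained in the first disk and one arc of a component contained in the second disk makes those two components intersect, after which no two disjoint open disks can each contain one of them (any disk containing either component contains their common intersection points). No shrinking of the separating disks helps, because the move genuinely entangles the two sides. A concrete instance already occurs inside the proposition: $s_2s_1s_2\in T_3$ is conjugate to $s_1\in T_3^2$, yet its literal closure has two components crossing each other in two points, so it admits no separating disks. The statement is therefore only true---and your reductions only go through---if ``split'' is read, as for split links, as a property of the equivalence class: a doodle is split if it is \emph{equivalent to} a diagram containing the two disks. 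Under that reading, carrying splitness across $\M_2$, $\M_3$, $\M_4$ is the trivial direction of invariance, and your last paragraph can simply be deleted rather than repaired.
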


\section{$R_{\infty }$-property for twin groups}\label{Rinfinity}
Let $G$ be a group and $\phi$ an automorphism of $G$. Two elements $x, y\in G$ are said to be ($\phi$-twisted) $\phi$-conjugate if there exists an element $g\in G$ such that $x = gy\phi(g)^{-1}$. The relation of $\phi$-conjugation is an equivalence relation and it divides the group into $\phi$-conjugacy classes. Taking $\phi$ to be the identity automorphism gives the usual conjugacy classes. The number of $\phi$-conjugacy classes $R(\phi)\in \mathbb{N} \cup \{\infty\}$ is called the \textit{Reidemeister number} of the automorphism $\phi$. We say that a group $G$ has $R_{\infty }$-property if $\R(\phi)= \infty$ for each $\phi \in \Aut(G)$. Obviously, finite groups (in particular $T_2$) do not satisfy $R_{\infty}$-property. In this section, we prove that twin groups $T_n$ have $R_{\infty }$-property for each $n \geq 3$. We begin by recalling a basic result on twisted conjugacy classes \cite[Corollary 3.2]{Fel'shtyn-Leonov-Troitsky}.

\begin{lemma}\label{sufficient-condition}
Let $\phi$ be an automorphism and $\hat{g}$ an inner automorphism of a group $G$. Then $R(\hat{g} \phi) = R(\phi)$.
\end{lemma}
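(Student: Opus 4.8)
The plan is to construct an explicit bijection between the set of $\phi$-twisted conjugacy classes of $G$ and the set of $(\hat{g}\phi)$-twisted conjugacy classes, so that the two Reidemeister numbers literally agree in $\mathbb{N}\cup\{\infty\}$. Write $\hat{g}$ for conjugation by $g$, that is $\hat{g}(a)=gag^{-1}$, so that $(\hat{g}\phi)(h)=g\phi(h)g^{-1}$ for every $h\in G$, and recall that $x,y\in G$ are $\phi$-conjugate exactly when $x=hy\phi(h)^{-1}$ for some $h\in G$.

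The key observation is that right translation by $g^{-1}$ intertwines the two equivalence relations: for all $x,y,h\in G$,
$$x=hy\phi(h)^{-1}\ \Longleftrightarrow\ xg^{-1}=h\,(yg^{-1})\bigl(g\phi(h)g^{-1}\bigr)^{-1}=h\,(yg^{-1})(\hat{g}\phi)(h)^{-1},$$
which is a one-line computation: multiplying the identity $x=hy\phi(h)^{-1}$ on the right by $g^{-1}$ and inserting $g^{-1}g$ between $y$ and $\phi(h)^{-1}$ gives the right-hand side, and the reverse implication follows by multiplying on the right by $g$. Hence the set map $\Theta\colon G\to G$, $\Theta(x)=xg^{-1}$, satisfies $x\sim_{\phi}y\Longleftrightarrow\Theta(x)\sim_{\hat{g}\phi}\Theta(y)$; consequently $\Theta$ carries each $\phi$-class onto a $(\hat{g}\phi)$-class and induces a well-defined injective map $\bar{\Theta}$ on the quotient. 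It is also surjective, since any $z\in G$ equals $\Theta(zg)$, so the $(\hat{g}\phi)$-class of $z$ is the image of the $\phi$-class of $zg$. Therefore $\bar{\Theta}$ is a bijection and $R(\hat{g}\phi)=R(\phi)$, with both sides allowed to be infinite.

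There is essentially no obstacle to overcome here; the only point requiring care is the bookkeeping of sides and inverses, since with the opposite convention $\hat{g}(a)=g^{-1}ag$ one would instead translate by $g$ rather than by $g^{-1}$. One could also phrase the argument more symmetrically by noting that $\widehat{g^{-1}}\circ\hat{g}=\id$, so that the construction applied to $g^{-1}$ in place of $g$ and to $\hat{g}\phi$ in place of $\phi$ produces the inverse of $\bar{\Theta}$; in particular the conclusion holds for every choice of $g$, which is exactly what makes Lemma \ref{sufficient-condition} useful for reducing the computation of $R(\phi)$ modulo inner automorphisms.
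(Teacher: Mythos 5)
Your proof is correct. The paper does not actually prove this lemma --- it simply quotes it from Fel'shtyn--Leonov--Troitsky \cite[Corollary 3.2]{Fel'shtyn-Leonov-Troitsky} --- so your argument supplies a proof the paper omits, and it is the standard one: the computation $h(yg^{-1})\bigl(g\phi(h)g^{-1}\bigr)^{-1}=hy\phi(h)^{-1}g^{-1}$ checks out, so right translation by $g^{-1}$ intertwines $\sim_{\phi}$ with $\sim_{\hat{g}\phi}$ and induces a bijection on the sets of twisted conjugacy classes, giving $R(\hat{g}\phi)=R(\phi)$ in $\mathbb{N}\cup\{\infty\}$. Your remark about the dependence on the convention for $\hat{g}$ is the only point where care is needed, and you handle it correctly.
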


The following result relates twisted conjugacy with usual conjugacy.

\begin{lemma}\label{alternate-condition}
Let $G$ be a group and $\phi$ an order $k$ automorphism of $G$. If $x, y\in G$ are $\phi$-conjugates, then the elements $x \phi(x) \phi^2(x)\cdots \phi^{k-1}(x)$ and  $y \phi(y) \phi^2(y)\cdots \phi^{k-1}(y)$ are conjugates (in the usual sense). The converse is not true in general.
\end{lemma}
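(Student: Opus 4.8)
The plan is to verify directly that the ``norm'' element $N(z) := z\,\phi(z)\,\phi^2(z)\cdots\phi^{k-1}(z)$ transforms by conjugation under a change of $\phi$-conjugacy representative. So suppose $x,y\in G$ are $\phi$-conjugate, say $x = g\,y\,\phi(g)^{-1}$ for some $g\in G$. Applying the automorphism $\phi^i$ to this relation gives $\phi^i(x) = \phi^i(g)\,\phi^i(y)\,\phi^{i+1}(g)^{-1}$ for every $i\ge 0$, which is the only input we need.

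Next I would form the product $N(x) = x\,\phi(x)\cdots\phi^{k-1}(x)$ and substitute these expressions. The factor $\phi^{i+1}(g)^{-1}$ at the right end of the $i$-th term cancels against the $\phi^{i+1}(g)$ opening the $(i{+}1)$-st term, so the product telescopes to
\[
N(x) \;=\; g\,\bigl(y\,\phi(y)\,\phi^2(y)\cdots\phi^{k-1}(y)\bigr)\,\phi^{k}(g)^{-1} \;=\; g\,N(y)\,\phi^{k}(g)^{-1}.
\]
Since $\phi$ has order $k$ we have $\phi^{k} = \id_G$, hence $\phi^{k}(g) = g$ and $N(x) = g\,N(y)\,g^{-1}$; that is, $N(x)$ and $N(y)$ are conjugate in the usual sense. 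There is essentially no obstacle here — the only points requiring care are bookkeeping the cancellation and remembering to invoke $\phi^{k}=\id$ at the last step.

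For the final assertion that the converse fails, I would exhibit a small example. Take $G = \mathbb{Z}$ (written additively) and $\phi(z) = -z$, an automorphism of order $k=2$. Then $N(z) = z + \phi(z) = 0$ for every $z$, so all norm elements coincide and are trivially conjugate, whereas $x$ and $y$ are $\phi$-conjugate exactly when $x - y = g - \phi(g) = 2g$ for some $g$, i.e. when $x \equiv y \pmod 2$. Thus $0$ and $1$ have equal norm element but are not $\phi$-conjugate, so $N(x)$ conjugate to $N(y)$ does not force $x$ to be $\phi$-conjugate to $y$. (A finite variant such as $G=\mathbb{Z}/4\mathbb{Z}$ with $\phi(z)=-z$ works identically.)
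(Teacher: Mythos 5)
Your proof of the forward implication is essentially identical to the paper's: apply $\phi^i$ to the relation $x=g\,y\,\phi(g)^{-1}$, multiply the resulting identities, let the product telescope, and use $\phi^k=\id$ to close it up into an honest conjugation $N(x)=g\,N(y)\,g^{-1}$. That part is correct and there is nothing to add. Where you genuinely diverge is in the counterexample to the converse. The paper works with the extraspecial $p$-group of order $p^3$ and exponent $p$ (odd $p$), the order-$p$ automorphism $a\mapsto ac$, $b\mapsto bc$, and the observation that $N(a)=1=N(b)$ while $a$ and $b$ lie in distinct cosets of the derived subgroup and hence cannot be $\phi$-conjugate. Your example is $G=\mathbb{Z}$ with $\phi(z)=-z$, where $N\equiv 0$ but the $\phi$-conjugacy classes are the two residue classes mod $2$; this is correct (and the $\mathbb{Z}/4\mathbb{Z}$ variant as well), and it is more elementary --- it makes clear that the failure of the converse has nothing to do with noncommutativity, since in an abelian group ``conjugate'' just means ``equal'' and the norm map $N$ simply need not be injective on $\phi$-conjugacy classes. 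The paper's nonabelian example buys a counterexample of odd prime order $k=p$ inside a finite group where conjugacy is nontrivial, but for the purpose of the lemma as stated either example suffices.
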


\begin{proof}
Since $x, y\in G$ are $\phi$-conjugates, there exists $z\in G$ such that $x=zy\phi (z^{-1})$. Applying $\phi^i$, $1 \le i \le k-1$, to this equality gives	
\begin{align*}
\phi(x) & = \phi(z) \phi(y)\phi^2 (z^{-1}),&\\
\phi^2(x) & = \phi^2(z) \phi^2(y)\phi^3 (z^{-1}),&\\
\vdots \\
\phi^{k-1}(x) & = \phi^{k-1}(z)  \phi^{k-1}(y)\phi^k (z^{-1}) =  \phi^{k-1}(z)  \phi^{k-1}(y) z^{-1}.&
\end{align*}
Multiplying the preceding equalities gives $x \phi(x) \phi^2(x)\cdots \phi^{k-1}(x) = z \big ( y \phi(y) \phi^2(y)\cdots \phi^{k-1}(y)\big ) z^{-1}$, which is the first assertion.
\par

For the second assertion, consider the extra-special $p$-group $$\mathscr{G} = \big\langle a, b, c ~|~ a^p=b^p=c^p=1, ab=bac, ac=ca, bc=cb \big\rangle$$ of order $p^3$ and exponent $p$, where $p$ is an odd prime. Note that $\mathscr{G}' = \Z(\mathscr{G}) =\langle c \rangle$ is of order $p$.  It is easy to check that the map $\phi : \mathscr{G}\rightarrow \mathscr{G}$ given by $\phi(a) = ac$ and $\phi(b)=bc$ extends to an order $p$ automorphism of $\mathscr{G}$. Then, we have $a \phi(a)\phi^2(a)\cdots \phi^{p-1}(a) = 1 =  b \phi(b)\phi^2(b)\cdots \phi^{p-1}(b)$. Suppose that there exists $g\in \mathscr{G}$ such that $a = g b \phi (g^{-1})$. This gives $a = g b g^{-1}c^l$ for some $l\in \mathbb{Z}$. Thus, $a = b b^{-1}g b g^{-1}c^l = b[b, g^{-1}]c^l \in b\mathscr{G}'$, which is not possible. Hence, $a$ cannot be $\phi$-conjugate to $b$.
\end{proof}

The following result \cite[Theorem 6.1]{NNS} will be used in the proof of the main theorem.

\begin{theorem}\label{T_n}
Let $T_n$ be the twin group with $n\geq 3$. Then the following hold:
\begin{enumerate}
\item  $\Aut(T_3) = \Inn(T_3) \rtimes \langle \psi \rangle \cong \Inn(T_3) \rtimes \mathbb{Z}_2$,
\item  $\Aut(T_4) = \Inn(T_4) \rtimes \langle\psi, \tau \rangle \cong \Inn(T_4) \rtimes S_3$,
\item $\Aut(T_n) = \Inn(T_n) \rtimes \langle \psi, \kappa \rangle \cong \Inn(T_n) \rtimes D_8$ for $n \geq 5$,
\end{enumerate}
where
\begin{enumerate}
\item[]  $D_8$ is the dihedral group of order 8,
\item[]  $\psi:T_n \to T_n$ is given by $\psi(s_i)=s_{n-i}$ for each $1\leq i\leq n-1$,
\item[] $\tau : T_4 \to T_4$ is given by $\tau(s_1)=s_1s_3$, $\tau(s_2)=s_2$ and $\tau(s_3)=s_1$,
\item[] $\kappa : T_n \to T_n$ is given by $\kappa(s_3)=s_{n-3}s_{n-1}$ and $\kappa(s_i)=s_{n-i}$ for $i\neq 3$.
\end{enumerate}
\end{theorem}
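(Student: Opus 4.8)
The plan is to regard $T_n$ as the right-angled Coxeter group (RACG) $W_{\Gamma_n}$, where $\Gamma_n$ has vertex set $\{s_1,\dots,s_{n-1}\}$ and an edge joining $s_i,s_j$ precisely when $|i-j|\ge 2$; equivalently, the non-commutation graph of $T_n$ is the path $s_1-s_2-\cdots-s_{n-1}$. Since $n\ge 3$, no vertex of $\Gamma_n$ is adjacent to all the others, so $\Z(T_n)=1$, $\Inn(T_n)\cong T_n$, and it suffices to compute $\Out(T_n)$ together with a splitting of $1\to\Inn(T_n)\to\Aut(T_n)\to\Out(T_n)\to 1$. For $n=3$ one uses $T_3\cong\mathbb{Z}_2*\mathbb{Z}_2$, whose automorphism group is the classical $\Inn(T_3)\rtimes\mathbb{Z}_2$, the $\mathbb{Z}_2$ generated by the factor swap $\psi$. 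For $n=4$, $T_4\cong(\mathbb{Z}_2\times\mathbb{Z}_2)*\mathbb{Z}_2$; by the classical structure of automorphism groups of free products (Fouxe-Rabinovitch), the factors are non-isomorphic hence not interchangeable, and every partial conjugation of one factor by an element of the other turns out to be inner, so $\Aut(T_4)\cong\Inn(T_4)\rtimes\Aut(\mathbb{Z}_2\times\mathbb{Z}_2)=\Inn(T_4)\rtimes\GL_2(\mathbb{F}_2)\cong\Inn(T_4)\rtimes S_3$, the $S_3$ being $\langle\psi,\tau\rangle$.

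For $n\ge 5$ I would study the homomorphism $\rho\colon\Aut(T_n)\to\GL_{n-1}(\mathbb{F}_2)=\Aut(T_n^{\mathrm{ab}})$ on the abelianisation $T_n^{\mathrm{ab}}\cong\mathbb{F}_2^{n-1}$, with basis $\bar s_1,\dots,\bar s_{n-1}$, and prove two statements. First, $\ker\rho=\Inn(T_n)$: if $\phi\in\ker\rho$ then $\phi(s_i)$ is an involution with $\overline{\phi(s_i)}=\bar s_i$, and since every finite-order element of a Coxeter group is conjugate into a finite standard parabolic, $\phi(s_i)$ is conjugate to $\prod_{j\in J}s_j$ for a clique $J$ of $\Gamma_n$; comparing images in $T_n^{\mathrm{ab}}$ forces $J=\{i\}$, so $\phi$ is a conjugating automorphism ($\phi(s_i)\sim s_i$ for all $i$). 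One then uses Tits' description of centralisers of standard parabolics — or the known generation of conjugating automorphisms of a RACG by partial conjugations — together with the observation that each $\Gamma_n\setminus\mathrm{star}(s_i)$, namely $\{s_{i-1},s_{i+1}\}$ (empty, a point, or an edge), is connected, to conclude that all partial conjugations of $T_n$ are inner, whence $\ker\rho=\Inn(T_n)$. Second, $\im\rho\cong D_8$: any automorphism permutes the conjugacy classes of maximal finite subgroups, which are exactly the $T_J$ for $J$ a maximal independent set of the path, so $\rho(\phi)$ permutes the subspaces $\langle\bar s_j:j\in J\rangle$; a direct analysis of the maximal independent sets of $s_1-\cdots-s_{n-1}$, their sizes and their pairwise intersections, shows the subgroup of $\GL_{n-1}(\mathbb{F}_2)$ with this property is generated by the permutation matrix of $\psi$ and the two elementary matrices $\bar s_3\mapsto\bar s_1+\bar s_3$ and $\bar s_{n-3}\mapsto\bar s_{n-3}+\bar s_{n-1}$, and is isomorphic to $D_8$ (the same bookkeeping gives $S_3$ for $n=4$ and $\mathbb{Z}_2$ for $n=3$). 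These matrices are indeed realised: $\psi$ is a graph automorphism, the transvections $s_3\mapsto s_1s_3$ and $s_{n-3}\mapsto s_{n-3}s_{n-1}$ (fixing the other generators) are automorphisms because $\mathrm{link}(s_3)\subseteq\mathrm{star}(s_1)$ and $\mathrm{link}(s_{n-3})\subseteq\mathrm{star}(s_{n-1})$, and $\kappa$ (resp. $\tau$) is obtained by composing one of these transvections with $\psi$. Both statements also follow at once from the structure theorem exhibiting $\Aut(W_\Gamma)$ as generated by graph automorphisms, inner automorphisms, transvections and partial conjugations.

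Combining the two statements, $\Out(T_n)=\Aut(T_n)/\ker\rho\cong\im\rho$, which is $\mathbb{Z}_2$, $S_3$, $D_8$ for $n=3$, $4$, $\ge 5$ respectively. Finally, the extension splits: from the defining formulas one checks directly that $\psi^2=\id$, $\tau^3=\id$, $\psi\tau\psi=\tau^{-1}$ and $\kappa^4=\id$, $\psi\kappa\psi=\kappa^{-1}$, so $\langle\psi,\tau\rangle\cong S_3$ and $\langle\psi,\kappa\rangle\cong D_8$, while no nontrivial element of $\langle\psi\rangle$, $\langle\psi,\tau\rangle$, $\langle\psi,\kappa\rangle$ is inner since each acts nontrivially on $T_n^{\mathrm{ab}}$; hence these subgroups map isomorphically onto $\Out(T_n)$, giving the stated semidirect product decompositions. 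I expect the first statement, $\ker\rho=\Inn(T_n)$ — ruling out automorphisms that are invisible to the abelianisation yet not inner — to be the main obstacle, since this is exactly where the special combinatorics of the path graph (no vertex whose star separates the graph, hence no exotic partial conjugations) must be exploited, whether one argues by hand through centralisers of parabolics or quotes the general automorphism machinery for right-angled Coxeter groups.
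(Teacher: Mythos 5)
This is one place where a comparison with ``the paper's own proof'' cannot literally be made: Theorem \ref{T_n} is not proved in this paper at all but is quoted from the authors' earlier work \cite[Theorem 6.1]{NNS}, so your proposal can only be judged on its own terms. On those terms it is essentially correct and follows the ``general machinery'' route rather than a hands-on analysis of $T_n$: you realise $T_n$ as the right-angled Coxeter group whose non-commutation graph is the path $s_1-\cdots-s_{n-1}$, invoke the Laurence--Servatius-type generation theorem for graph products of cyclic groups (inner automorphisms, graph symmetries, dominated transvections, partial conjugations), and then the verifications are exactly as you say: every $\Gamma_n\setminus\mathrm{star}(s_i)$ is a single vertex, a single edge, or empty, so all partial conjugations are inner; the only graph symmetry is the reversal $\psi$; the only admissible transvections are $s_3\mapsto s_1s_3$ and $s_{n-3}\mapsto s_{n-3}s_{n-1}$; and the representation $\rho$ on $T_n^{\mathrm{ab}}\cong\mathbb{F}_2^{n-1}$ certifies both that $\langle\psi,\kappa\rangle$ (resp.\ $\langle\psi,\tau\rangle$, $\langle\psi\rangle$) has full order $8$ (resp.\ $6$, $2$) and that it meets $\Inn(T_n)$ trivially, which gives the splitting; the free-product treatment of $n=3,4$ is also fine. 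Two points deserve tightening. First, for a right-angled \emph{Coxeter} group the transvection $s_v\mapsto s_vs_w$ requires, in addition to $\mathrm{link}(v)\subseteq\mathrm{star}(w)$, that $s_v$ and $s_w$ commute so that the image is again an involution; this holds for the two pairs above, but it is precisely this extra condition that excludes all other candidates (e.g.\ $s_2\mapsto s_2s_1$ passes the link test but fails here), so it should be stated. Second, your alternative upper bound for $\im\rho$ --- that the stabiliser in $\GL_{n-1}(\mathbb{F}_2)$ of the family of subspaces spanned by the maximal cliques is exactly $D_8$ --- is asserted rather than carried out and is the one genuinely fiddly combinatorial step if you decline to cite the generation theorem; as written, the weight of the argument rests on that theorem (for graph products of abelian groups this is Gutierrez--Piggott--Ruane), which is a legitimate but essential citation and should be made explicit. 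What your route buys is brevity and a conceptual explanation of where $\tau$ and $\kappa$ come from (each is the unique dominated transvection on its side composed with $\psi$); what it costs is that the substantive content is outsourced to the structure theory of automorphisms of right-angled Coxeter groups rather than derived from the combinatorics of $T_n$ as in \cite{NNS}.
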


We now proceed to prove the main theorem of this section.

\begin{theorem}\label{MainTheoremRinfinity}
$T_n$ satisfy $R_{\infty}$-property for all $n\geq 3$.
\end{theorem}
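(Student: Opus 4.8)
The plan is to combine Lemma~\ref{sufficient-condition}, Lemma~\ref{alternate-condition}, and the explicit description of $\Aut(T_n)$ from Theorem~\ref{T_n}. By Lemma~\ref{sufficient-condition}, the Reidemeister number $R(\phi)$ depends only on the coset $\Inn(T_n)\phi$ in $\Out(T_n)$, so it suffices to show $R(\phi)=\infty$ for one representative $\phi$ of each of the finitely many outer classes listed in Theorem~\ref{T_n} (namely the classes in $\mathbb{Z}_2$, $S_3$, or $D_8$ according as $n=3$, $n=4$, or $n\ge 5$). For the identity outer class this amounts to showing $T_n$ has infinitely many ordinary conjugacy classes, which follows from Theorem~\ref{conditionforconjugate}: for instance the cyclically reduced words $(s_1s_2)^k$, $k\ge 1$, are pairwise non-conjugate since they have distinct lengths, or more robustly one produces infinitely many cyclically reduced words no two of which are cyclic permutations of each other modulo flips.

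For a nontrivial outer class represented by an order-$k$ automorphism $\phi$ (after composing with a suitable inner automorphism we may assume the representative has finite order equal to its order in $\Out$), I would use the contrapositive of Lemma~\ref{alternate-condition}: if there were only finitely many $\phi$-conjugacy classes, then the set of ordinary conjugacy classes of elements of the form $N_\phi(x):=x\,\phi(x)\,\phi^2(x)\cdots\phi^{k-1}(x)$ would be finite. So the strategy is to exhibit an explicit infinite family $x_1,x_2,\dots$ in $T_n$ such that the "twisted norms" $N_\phi(x_m)$ are pairwise non-conjugate in $T_n$, again detected via Theorem~\ref{conditionforconjugate} (e.g.\ by length, which is visible directly once one has a reduced form). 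Concretely, for $\psi(s_i)=s_{n-i}$, one can try $x_m=(s_1s_2)^m$ or a similar word supported near one end of the strand index, arranged so that $x_m$ and its $\psi$-images live in "far commuting" regions and therefore $N_\psi(x_m)$ is already reduced (or cyclically reduces to something) of length growing with $m$; the same idea applies to $\tau$ and $\kappa$, whose formulas are given explicitly in Theorem~\ref{T_n}. Since $\langle\psi\rangle$, $\langle\psi,\tau\rangle$, $\langle\psi,\kappa\rangle$ together with the inner automorphisms exhaust $\Aut(T_n)$, handling one representative per outer class finishes all cases.

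The main obstacle is the case analysis over outer automorphism classes: for each of the (up to eight, when $n\ge 5$) outer classes one must pick a convenient finite-order representative, compute $N_\phi(x_m)$ for the chosen family, bring it to reduced or cyclically reduced form, and verify via Theorem~\ref{conditionforconjugate} that these are pairwise non-conjugate. The delicate point is choosing the family $x_m$ uniformly enough that the computation of $N_\phi(x_m)$ stays transparent for \emph{all} the relevant $\phi$ simultaneously — in particular one wants $x_m$ and its various $\phi$-translates to be supported on index-disjoint (far-commuting) blocks so that no unexpected cancellation occurs and the length is simply additive. Once such a family is found, the non-conjugacy is cheap (a length or cyclic-word invariant from Theorem~\ref{conditionforconjugate}), so essentially all the work is in this bookkeeping. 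A clean way to organise it is to first reduce, using Lemma~\ref{sufficient-condition}, to the subgroup of outer representatives generated by $\psi,\tau,\kappa$, then note that every such representative fixes the "middle block" of generators setwise in a controlled way, and exploit that to build $x_m$ out of generators that $\phi$ moves predictably.
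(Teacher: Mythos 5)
Your proposal is correct and follows essentially the same route as the paper: reduce via Lemma \ref{sufficient-condition} to the finite complements $\langle\psi\rangle$, $\langle\psi,\tau\rangle$, $\langle\psi,\kappa\rangle$ from Theorem \ref{T_n}, then apply Lemma \ref{alternate-condition} to the family $x_m=(s_1s_2)^m$ (with minor variants for small $n$) and detect non-conjugacy of the twisted norms via Theorem \ref{conditionforconjugate}. The paper executes exactly this bookkeeping, case by case for $n\ge 6$, $n=5$, $n=4$, $n=3$, using the observation that $\langle s_1,s_2,s_{n-2},s_{n-1}\rangle$ is invariant under all the relevant outer representatives to keep the norm computations transparent.
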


\begin{proof}
It follows from \cite[Proposition 5.1]{NNS} that $T_n$ has infinitely many conjugacy classes for all $n\geq 3$. Hence, due to Lemma \ref{sufficient-condition}, it is enough to show that $T_n$ has infinitely many $\phi$-conjugacy classes for automorphisms $\phi$ in the groups $ \langle \psi \rangle$, $\langle\psi, \tau \rangle$ and $\langle \psi, \kappa \rangle$, which are all finite. Our plan is to use Lemma \ref{alternate-condition}.  We divide the proof into four cases, namely,  $n\geq 6$, $n=5$, $n=4$ and $n=3$.\\

\noindent {{Case $n \ge 6.$}} Consider the sequence of elements $x_i = (s_1s_2)^i$, $i\geq 1$. We claim that for any automorphism $\phi \in \langle \psi, \kappa \rangle$, $x_i$ is not $\phi$-conjugate to $x_j$ whenever $i\neq j$. Let us, on the contrary, suppose that $x_i$ is $\phi$-conjugate to $x_j$ for some $i\neq j$. Then, by Lemma \ref{alternate-condition},  $x_i \phi(x_i) \phi^2(x_i)\cdots \phi^{k-1}(x_i)$ and  $x_j \phi(x_j) \phi^2(x_j)\cdots \phi^{k-1}(x_j)$ are conjugates, where $k$ is the order of the automorphism $\phi$.
\par
Note that the subgroup $H= \langle s_1, s_2, s_{n-2}, s_{n-1} \rangle$ of $T_n$ is invariant under all automorphisms in $\langle \psi, \kappa \rangle$. In fact, $\psi (x) = \kappa (x)$ for all $x \in H$. Thus, it is sufficient to show that $x_i \psi(x_i)$ and  $x_j \psi(x_j)$ are not conjugates in $T_n$. Observe that $x_i \psi(x_i)= (s_1s_2)^i (s_{n-1}s_{n-2})^i$ and $x_j \psi(x_j)= (s_1s_2)^j (s_{n-1}s_{n-2})^j$. It is easy to see that the words $(s_1s_2)^i (s_{n-1}s_{n-2})^i$ and $(s_1s_2)^j (s_{n-1}s_{n-2})^j$ are cyclically reduced, and hence by Theorem \ref{conditionforconjugate} they are not conjugates of each other for $i\neq j$. Thus, we have infinitely many $\phi$-conjugacy classes in $T_n$, $n\ge 6$, for any automorphism $\phi$ of $T_n$.\\

\noindent {{Case $n= 5.$}}  For this case we consider the sequence of elements $x_i = (s_1s_2)^{2i}$, $i\geq 1$. We claim that for any automorphism $\phi \in \langle \psi, \kappa \rangle$, $x_i$ is not $\phi$-conjugate to $x_j$ for $i\neq j$. Note that there are two automorphisms of order $4$, namely $\kappa$ and $\kappa^3$, and five automorphisms of order $2$, namely $\kappa^2$, $\psi \kappa$, $\psi \kappa^2$, $\psi \kappa^3$ and $\psi$.\\
Direct computations give
$$x_i\kappa(x_i)\kappa^2(x_i)\kappa^3(x_i) = (s_1s_2)^{2i}(s_4s_3)^{2i}(s_1s_2s_4)^{2i}(s_4s_1s_3)^{2i} = (s_1s_2)^{2i}(s_4s_3)^{2i}(s_1s_2)^{2i}(s_4s_3)^{2i}.$$ Again, by Theorem \ref{conditionforconjugate}, $(s_1s_2)^{2i}(s_4s_3)^{2i}(s_1s_2)^{2i}(s_4s_3)^{2i}$ and $(s_1s_2)^{2j}(s_4s_3)^{2j}(s_1s_2)^{2j}(s_4s_3)^{2j}$ are not conjugate for $i\neq j$. Similarly,
$x_i\psi\kappa^2(x_i)=(s_1s_2)^{2i}(s_4s_1s_3)^{2i} = (s_1s_2)^{2i}(s_4s_3)^{2i}$. As before, $x_i\psi\kappa^2(x_i)$ is not conjugate to $x_j\psi\kappa^2(x_j)$ for $i\neq j$. The remaining automorphisms can be considered in the same manner, and hence there are infinitely many $\phi$-conjugacy classes in $T_5$ for any automorphism $\phi$.\\

\noindent {{Case $n= 4.$}}  We again consider the sequence of elements $x_i = (s_1s_2)^i$, $i\geq 1$, and prove that for any automorphism $\phi \in \langle\psi, \tau \rangle$, $x_i$ is not $\phi$-conjugate to $x_j$ whenever $i\neq j$. Note that there are two automorphisms of order $3$, namely $\tau$ and $\tau^2$, and three automorphisms of order $2$, namely $\psi \tau$, $\psi \tau^2$ and $\psi$. Direct computations yield $x_i\tau(x_i)\tau^2(x_i) = (s_1s_2)^i (s_1s_3s_2)^i (s_3s_2)^i$. Again by Theorem \ref{conditionforconjugate}, $x_i\tau(x_i)\tau^2(x_i)$ is not conjugate to $x_j\tau(x_j)\tau^2(x_j)$ whenever $i\neq j$. The remaining automorphisms can be dealt with similarly, and the assertion follows.\\

\noindent {{Case $n= 3.$}} Unlike the earlier cases, here we consider the sequence of elements $x_i = (s_1s_2)^is_1$, $i\geq 1$. In this case we need to consider only one automorphism $\psi$ which is of order $2$. We have $x_i\psi(x_i) = (s_1s_2)^is_1 (s_2s_1)^is_2 = (s_1s_2)^{2i+1}$. By Theorem  \ref{conditionforconjugate}, $x_i\psi(x_i) $ is not conjugate to $x_j\psi(x_j)$ whenever $i\neq j$, and hence there are infinitely many $\psi$-conjugacy classes in $T_3$. This completes the proof of the theorem.
\end{proof}

\begin{remark}\label{remark}
It is interesting to see whether the pure twin group $PT_n$ has $R_{\infty}$-property. It is known that $PT_3 \cong \mathbb{Z}$, $PT_4 \cong F_7$ and $PT_5 \cong F_{31}$ \cite{bardakov,Jesus}. It follows from \cite[Theorem 2.1]{Karel} and \cite[Lemma 2.1]{Mubeena2} that non-abelian free groups of finite rank have $R_{\infty}$-property. Thus, $PT_4$ and $PT_5$ have $R_{\infty}$-property. A precise description of $PT_6$ has been obtained recently in \cite[Theorem 2]{Mostovoy} where it is proved that $PT_6 \cong F_{71} *(*_{20}(\mathbb{Z} \oplus \mathbb{Z}))$. On the other hand, a complete description of $PT_n$ is still unknown for $n \geq 7$. Thus, determining whether $PT_n$ has $R_{\infty}$-property for $n \geq 6$ remains an open problem.
\end{remark}

\begin{remark}
It is well-known that the braid group $B_n$, $n \geq 3$, has $R_{\infty}$-property. It is interesting to explore the property for the pure braid group $P_n$. It is known that $P_2$ is infinite cyclic, and hence does not have $R_{\infty}$-property. It follows from \cite[Section 4]{Mahender} that $P_3 \cong F_2 \times \mathbb{Z}$. Since $\mathbb{Z}$ is characteristic in $P_3$ and $F_2$ has $R_{\infty}$-property (as in Remark \ref{remark}), by \cite[Lemma 2.1]{Mubeena2} it follows that $P_3$ has $R_{\infty}$-property. Whether the pure braid group $P_n$ has $R_{\infty}$-property for $n \geq 4$ seems to be unknown.
\end{remark}

\section{Co-Hopfian property for twin groups}\label{Hopfian}
A group is said to be co-Hopfian (respectively Hopfian) if every injective (respectively surjective) endomorphism is an automorphism. For example, the infinite cyclic group is not co-Hopfian whereas all finite groups (in particular $T_2$) are Hopfian as well as co-Hopfian. For twin groups $T_n$ with $n \geq 3$, we have the following result.

\begin{theorem}\label{MainTheoremCoHopfian}
$T_n$ is not co-Hopfian for $n \geq 3$.
\end{theorem}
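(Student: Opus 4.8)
The plan is to exhibit, for each $n \ge 3$, an explicit injective endomorphism of $T_n$ that fails to be surjective. The natural candidate, by analogy with the braid group case recalled in the introduction (where one sends each generator to itself times a power of the center), is to use the tensor/stabilization maps $\M_3$ and $\M_4$ that are already available: roughly, embed $T_n$ into $T_{n+1}$ via $\beta \mapsto \I \otimes \beta$ (the map $s_i \mapsto s_{i+1}$) and then fold $T_{n+1}$ back into $T_n$. But $T_n$ has torsion and trivial center, so the braid-group trick does not transfer verbatim; instead I would look for a self-embedding built from a \emph{reduced-word} construction. A clean choice is the map $\eta \colon T_n \to T_n$ that ``spreads out'' a generator: for instance, for $n\ge 3$ define $\eta$ on generators by fixing $s_1,\dots,s_{n-2}$ and sending $s_{n-1}$ to a suitable reduced word $w$ in $s_1,\dots,s_{n-1}$ of length $>1$ chosen so that the defining relations of $T_n$ are preserved (the involution relation $w^2=1$ and far commutativity $s_i w = w s_i$ for $i \le n-3$). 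Verifying that such a $w$ exists and that $\eta$ is a homomorphism is the first step.

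The second step is injectivity. Here I would use the combinatorial machinery of Section~\ref{basic}: Lemma~\ref{lem1} characterizes reduced words, and one shows that $\eta$ sends a reduced word to a word whose reduction still ``remembers'' the original, so that $\eta(g)=1$ forces $g=1$ by a length/normal-form argument. Concretely, if $\eta$ replaces each occurrence of $s_{n-1}$ by a fixed reduced block and leaves the other letters alone, then applying Lemma~\ref{lem1} to $\eta(g)$ and tracking where cancellations can occur shows the reduced length of $\eta(g)$ is a strictly monotone function of the reduced length of $g$; in particular $\ell(\eta(g)) \ge \ell(g)$ with equality only when $g$ has no $s_{n-1}$, whence $\ker\eta$ is trivial. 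The third step is non-surjectivity: one exhibits an element of $T_n$ not in the image of $\eta$. If $\eta$ is chosen so that its image lies in a proper subgroup — e.g. if every $\eta(s_i)$, when reduced, has even total length, then $\operatorname{im}\eta$ is contained in the kernel of the length-parity homomorphism $T_n \to \mathbb{Z}/2$ sending each $s_i \mapsto 1$, and $s_1 \notin \operatorname{im}\eta$ — then non-surjectivity is immediate. Alternatively one argues directly that $s_1$ has no $\eta$-preimage by a word-length inequality as above.

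The main obstacle is the \emph{injectivity} step: checking that the homomorphism condition holds is a finite relation-by-relation verification, and non-surjectivity is cheap once the construction is arranged to land in a proper (say, index-two or otherwise detectable) subgroup, but proving $\ker \eta = 1$ genuinely requires the normal-form control from Lemma~\ref{lem1} and a careful bookkeeping of how far-commutativity flips interact with the substituted blocks. I would set up a small lemma isolating exactly the claim ``$\eta$ is length-non-decreasing on reduced words,'' prove it by induction on word length using Lemma~\ref{lem1}, and then deduce both injectivity and non-surjectivity from it. One should also separate out small cases if the uniform choice of $w$ degenerates: $n=3$ and $n=4$ may need a slightly different explicit $w$ than the general $n \ge 5$ pattern, mirroring the case split in the proof of Theorem~\ref{MainTheoremRinfinity}.
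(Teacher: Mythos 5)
There is a genuine gap, and it is in the very first step: the endomorphism you propose does not exist for $n \geq 4$. You ask for a reduced word $w$ of length greater than $1$ with $w^2=1$ and $ws_i = s_iw$ for all $i \leq n-3$, so that $s_1,\dots,s_{n-2}$ may be fixed and $s_{n-1}$ sent to $w$. But in a right angled Coxeter group the centralizer of a generator is the standard parabolic subgroup on its star, so $C_{T_n}(s_i)=\langle s_j : j=i \text{ or } |j-i|\geq 2\rangle$, and an intersection of standard parabolics is the standard parabolic on the intersection of the generating sets. For $n\geq 5$ this gives $\bigcap_{i=1}^{n-3} C_{T_n}(s_i)=\langle s_{n-1}\rangle=\{1,s_{n-1}\}$ (e.g.\ for $n=5$, $\langle s_1,s_3,s_4\rangle\cap\langle s_2,s_4\rangle=\{1,s_4\}$), so the only admissible $w$ has length $1$ and gives the identity map; even without the centralizer theorem one checks directly via Lemma \ref{lem1} that the natural candidates fail, e.g.\ $s_{n-3}(s_{n-2}s_{n-1}s_{n-2})s_{n-3}(s_{n-2}s_{n-1}s_{n-2})$ is already reduced of length $8$, so $s_{n-2}s_{n-1}s_{n-2}$ does not commute with $s_{n-3}$. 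For $n=4$ the unique candidate is $w=s_1s_3$, but then $s_3=s_1\cdot(s_1s_3)$ lies in the image, so the map is surjective and, by Hopficity, an automorphism. Your construction therefore yields a proper self-embedding only for $n=3$. The underlying problem is the choice to deform the extremal generator $s_{n-1}$, which is forced to commute with all but one of the remaining generators; the paper instead deforms an interior generator, setting $\psi_n(s_2)=s_2s_1s_2$ and $\psi_n(s_i)=s_i$ otherwise, so that the only commutation constraints on the deformed image are with $s_4,\dots,s_{n-1}$, and both $s_1$ and $s_2$ already commute with those.

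Two further points. Your non-surjectivity argument is internally inconsistent: since $\eta$ fixes $s_1,\dots,s_{n-2}$, these images have odd length, so $\operatorname{Im}\eta$ is certainly not contained in the kernel of the total length-parity homomorphism. The invariant that does work, and is what the paper uses, is the exponent sum of the single deformed generator modulo $2$ (a coordinate of the abelianization $(\mathbb{Z}/2)^{n-1}$): every $\psi_n(s_i)$ contains an even number of $s_2$'s, hence $s_2\notin\operatorname{Im}\psi_n$. Finally, your injectivity step is only a sketch; the paper proves it by induction on $n$, writing a reduced $w\in\Ker(\psi_n)$ as $w_1s_{n-1}w_2s_{n-1}\cdots w_ks_{n-1}w_{k+1}$ with $w_i\in T_{n-1}$ and observing that the $s_{n-1}$'s cannot all cancel because, by reducedness and Lemma \ref{lem1}, each $w_j$ between two $s_{n-1}$'s must contain an $s_{n-2}$. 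A length-monotonicity lemma of the kind you describe could plausibly be made to work for the correct map, but it is not a routine verification and would need the same normal-form bookkeeping.
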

\begin{proof}
We construct a map $\psi_n : T_n \to T_n$, $n \geq 3$, by setting

\begin{equation*}
\psi_n(s_i) = 
\begin{cases}
s_i & \text{for}\ i\neq 2,\\
s_2s_1s_2 & \text{for}\ i=2.
\end{cases}
\end{equation*}

It is easy to check that $\psi_n$ is a group homomorphism. It is evident from the definition of $\psi_n$ that for any element $w\in T_n$, the expression $\psi_n(w)$ contains an even number of $s_2$. Hence, $s_2\notin \psi_n(T_n)$ and the map $\psi_n$ is not surjective. We now proceed to prove that $\psi_n$ is injective by induction on $n$.

We note that $T_3$ can be generated by elements $s_1$ and $s_1s_2$. Since the cyclic subgroup $\langle s_1s_2 \rangle$ is normal in $T_3$, any element of $T_3$ is of the form either $(s_1s_2)^m$ or ${(s_1s_2)^m}s_1$ for some integer $m$. The images of these elements under $\psi_3$ are:
\begin{align*}
\psi_3((s_1s_2)^m) &= (s_1s_2s_1s_2)^m = (s_1s_2)^{2m},&\\
\psi_3({(s_1s_2)^m}s_1)&= {(s_1s_2s_1s_2)^m}s_1 = (s_1s_2)^{2m}s_1,&
\end{align*}

It is clear that none of the non-trivial elements of $T_3$ belong to $\Ker(\psi_3)$, and hence $\psi_3$ is injective. \par

Suppose that $1 \neq w \in \Ker(\psi_4)$. Without loss of generality we may assume that $w$ is a reduced word.  Suppose that $w = w_1s_3w_2s_3\cdots w_ks_3w_{k+1}$, where $w_i$'s are words in $T_3$. Then, we have $$\psi_4(w)= \psi_4(w_1)s_3\psi_4(w_2)s_3\cdots \psi_4(w_k)s_3\psi_4(w_{k+1})=1.$$
Notice that all the $w_i$'s are non-trivial words in $T_3$, since $w$ is a reduced word. Also, the map $\psi_4$ restricted to $T_3$ is $\psi_3$ which is injective. Thus, $\psi_4(w_i) = \psi_3(w_i) \neq 1$ for all $1 \le i \le k+1$. For $\psi_4(w)=1$ to be true, all the $s_3$'s must get cancelled. But there will always be at least one $s_2$ in between any two $s_3$'s, which is a contradiction.  Therefore, the map $\psi_4$ is injective.
\par
Let us now assume that $\psi_{n-1}$ is injective for $n\ge 5$. Consider a non-trivial reduced word $w$ in $\Ker(\psi_n)$. Let $w=w_1s_{n-1}w_2s_{n-1}\cdots w_ks_{n-1}w_{k+1}$, where $w_i \in T_{n-1}$ for all $1 \le i \le k+1$. This implies that $$\psi_n(w)= \psi_{n-1}(w_1)s_{n-1}\psi_{n-1}(w_2)s_{n-1}\cdots \psi_{n-1}(w_k)s_{n-1}\psi_{n-1}(w_{k+1})=1.$$ For the above equality to be true, all the $s_{n-1}$'s must get cancelled. In particular, the two $s_{n-1}$'s in the subword $s_{n-1}\psi_{n-1}(w_j)s_{n-1}$ must get cancelled. This implies that $\psi_{n-1}(w_j)$ does not have $s_{n-2}$, which means that $w_j$ does not have $s_{n-2}$, which contradicts the fact that $w$ is reduced. Hence, $\psi_n$ is injective. 
\end{proof}

\begin{remark}
Note that the infinite cyclic group and a free product of any two non-trivial groups is not co-Hopfian. Thus, $PT_n$ is not co-Hopfian for $3\leq n \leq 6$. Whether $PT_n$ is co-Hopfian for $n \geq 7$ remains unknown.
\end{remark}

\begin{remark}
It is well-known that the braid group $B_n$ is not co-Hopfian for $n \geq 2$ \cite{bell}. In fact, the map $\phi_n: B_n \to B_n$, $n \geq 2$, defined on the standard generators by
$$\phi_n(\sigma_i)=\sigma_iz,$$ where $\langle z \rangle = \Z(B_n)$, is an injective homomorphism which is not surjective. Since $\phi_n(P_n) \subset P_n$ and $z \in \Z(B_n)=\Z(P_n)$ does not have a preimage under $\phi_n$, it follows that the restriction of $\phi_n$ on $P_n$ is injective but not surjective, and hence $P_n$ is not co-Hopfian for $n \geq 2$.
\end{remark}
\bigskip

\noindent\textbf{Acknowledgement.}
The authors thank the referee for many useful comments. Also, Timur Nasybullov is thanked for many comments and discussions on twisted conjugacy, in particular for Lemma \ref{alternate-condition}, which considerably shortened the original proof of Theorem \ref{MainTheoremRinfinity}. Tushar Kanta Naik
and Neha Nanda thank IISER Mohali for the Post Doctoral and the PhD Research Fellowships, respectively. Mahender Singh acknowledges support from the Swarna Jayanti Fellowship grants DST/SJF/MSA-02/2018-19 and SB/SJF/2019-20/04.

\end{document}